\newcommand{\ii}{\mathrm{i}}
\newtheorem{proposition}{Proposition}[section]
\newtheorem{lemma}[proposition]{Lemma}%[section]
\newtheorem{theorem}[proposition]{Theorem}%[section]
\newtheorem{corollary}[proposition]{Corollary}%[section]
\newtheorem{conjecture}{Conjecture}
\theoremstyle{definition}
\newtheorem{definition}[proposition]{Definition}%[section]
\newtheorem{remark}[proposition]{Remark}
\newtheorem*{akn}{Acknowledgements}
\newtheorem*{solution*}{Solution}
\newtheorem{ass}{Assumption}
\DeclareMathOperator{\supp}{supp}
\DeclareMathOperator{\dive}{div}
 \newcommand{\bbar}[1]{\setbox0=\hbox{$#1$}\dimen0=.2\ht0 \kern\dimen0 \overline{\kern-\dimen0 #1}}
 \DeclareMathOperator{\End}{\ensuremath{\mathcal{E}\kern-.125em\mathpzc{nd}}}
 \DeclareMathOperator{\Hom}{\mathcal{H}\kern-.125em\mathpzc{om}}
 \DeclareMathOperator{\Proj}{\mathcal{P}\kern-.125em\mathpzc{roj}}
 \renewcommand{\setminus}{\smallsetminus}
 \newcommand{\udot}{\ensuremath{{\lower .183333em \hbox{\LARGE \kern -.05em$\cdot$}}}}
 \DeclareMathOperator{\Vect}	{Vec}
 \newcommand{\cA}{\mathcal{A}}
\newcommand{\cS}{\mathcal{S}}
 \newcommand{\cU}{\mathcal{U}}
\newcommand{\C}{\mathbb{C}}
 \newcommand{\N}{\mathbb{N}}
 \newcommand{\Q}{\mathbb{Q}}
 \newcommand{\R}{\mathbb{R}}
 \newcommand{\p}{\partial}
 \DeclareMathOperator{\Vol}{Vol}
\author{Ivan Beschastnyi\footnote{i.beschastnyi@gmail.com, 
Inria, Sorbonne Universit\'e, Universit\'e de Paris, CNRS, Laboratoire Jacques-Louis Lions,  Paris, France}\,, 
Ugo Boscain\footnote{ugo.boscain@upmc.fr, CNRS, Sorbonne Universit\'e, Universit\'e de Paris, Inria, Laboratoire Jacques-Louis Lions,  Paris, France}%\;$^\S$
\,, Mario Sigalotti \footnote{mario.sigalotti@inria.fr, Inria, Sorbonne Universit\'e, Universit\'e de Paris, CNRS, Laboratoire Jacques-Louis Lions,  Paris, France}}
\title{An
obstruction to small-time controllability of the bilinear Schr\"odinger equation\footnote{This work was supported by the ANR project SRGI ANR-15- CE40-0018 and by the ANR project Quaco ANR-17-CE40-0007-01.}}
\begin{document}

\maketitle

\begin{abstract}
In this article we discuss which controllability properties of classical Hamiltonian systems are preserved after quantization. We discuss some necessary and some sufficient conditions for small-time controllability of classical systems and quantum systems using the WKB method. In particular, we investigate the conjecture that if the classical system is not small-time controllable, then the corresponding quantum system is not small-time controllable either. 
\end{abstract}

\section{Introduction}
\label{sec:intro}

The relation between classical and quantum dynamics has been an important question for physicists and mathematicians since the very beginning of quantum mechanics. The set of tools which allow to study a quantum system using the properties of the corresponding classical system is known as semi-classical analysis. Such methods try to capture the general idea that, when the Planck constant goes to zero, quantum dynamics approaches classical dynamics. In this way many interesting properties of a classical system can be observed in the behaviour of the corresponding quantum system (see for instance~\cite{zworski,nonnen} and the references therein).

The goal of this article is to explore which properties of the controlled Schr\"odinger equation
\begin{equation}
\label{eq:schro}
\ii\p_t \psi = -\frac{\Delta \psi}{2} + V(x)\psi + u(t) W(x) \psi, \quad \psi \in L^2(M),
\end{equation}
on a complete Riemannian manifold $M$ can be deduced from the behaviour of the corresponding controlled classical system
\begin{equation}
\label{eq:class}
\dot{\lambda} = \vec{H}(t,\lambda), \quad \lambda \in T^*M,
 \end{equation} 
with Hamiltonian
\begin{equation}
\label{eq:ham_class}
H(t,\lambda) = \frac{\|p\|^2}{2} + V(x)+ u(t) W(x),
\end{equation}
where $\lambda = (p,x) \in T^*_x M$ and $\|\cdot\|$ denotes the usual 
norm
on the cotangent bundle induced by the Riemannian metric 
on $M$. The manifold setting enables to include relevant physical systems such as rotating molecules~\cite{molecule,kochsugny}.

The first idea that comes to mind is that maybe both systems share some similar controllability properties. Let $L^2(M)$ be the space of square integrable functions with respect to the standard Riemannian volume. Since the Schr\"odinger equation preserves the $L^2$-norm, we denote by $\cS(M) = \{\psi \in L^2(M)\,:\,\|\psi\|_{L^2(M)} = 1\}$ the $L^2$-sphere and consider quantum and classical reachable sets in time less than $T\in \R_+$ from states $\psi_0 \in \cS(M)$ and $\lambda_0 \in T^*M$:
\begin{align*}
\cA^q_{\leq T}(\psi_0) &= \left \{\phi \in \cS(M) \, : \, \exists 0 \leq s \leq T, u \in \cU, \psi(\cdot) \text{ solution of (\ref{eq:schro}) } \text{ s.t. } \psi(0) = \psi_0,\; \psi(s) = \phi \right \},\\
\cA^c_{\leq T}(\lambda_0) &= \left \{\mu \in T^*M \, : \, \exists 0 \leq s \leq T, u \in \cU, \lambda(\cdot) \text{ solution of (\ref{eq:class}) }\text{ s.t. } \lambda(0) = \lambda_0,\; \lambda(s) = \mu \right \}.
\end{align*}
Here $\cU$ denotes the set of admissible controls. Under reasonable assumptions on $V$ and $W$, one can take $\cU$ to be the space of piecewise constant functions. In particular we will make the following general assumption:
\begin{ass}
\label{ass}
We assume that $V,W$ are $C^\infty$ functions such that the operators
$$
-\frac{\Delta}{2} + V(x) + uW(x)
$$
are essentially self-adjoint on $L^2(M)$ for every $u\in \R$. 
\end{ass}
Assumption~\ref{ass} holds, for instance, if $M$ is a compact manifold or if $M=\R^n$, $V$ is bounded from below and $W\in L^\infty(M)\cap C^\infty(M)$~\cite[Theorem X.28]{reed-simon}.

It is well-known that one cannot   generally expect full controllability of equation~\eqref{eq:schro} from a given state $\psi_0$ \cite{ball_and_marsden}, i.e., in general
\begin{equation}\label{eq:notcontrollable}
\bigcup_{T\geq 0} \cA^q_{\leq T}(\psi_0) \subsetneq \cS(M).
\end{equation}
Nevertheless, 
it has been proved that, under suitable
generic 
conditions, 
the left-hand side 
of \eqref{eq:notcontrollable} is dense in $\cS(M)$~\cite{generic}.
If  
\begin{equation*}
\overline{\bigcup_{T\geq 0} \cA^q_{\leq T}(\psi_0)} = \cS(M)
\end{equation*}
for every state $\psi_0 \in \cS(M)$, then we say that the Schr\"odinger equation~\eqref{eq:schro} is approximately controllable. The full and approximate classical controllabilities could be defined in the same way, but in view of the relation with quantum mechanics it makes sense to define approximate controllability of the system~\eqref{eq:class} as the property that there exists a dense set $\Upsilon\in T^* M$ such that for every initial state $\lambda_0 \in \Upsilon$
\begin{equation}
\label{eq:class_approx}
\overline{\bigcup_{T\geq 0} \cA^c_{\leq T}(\lambda_0)} = T^* M.
\end{equation}
For brevity we write ``QA" for ``quantum approximate" and ``CA" for ``classical approximate".

Coming back to our question, the first naive guess could be that CA controllability should imply QA controllability. However simple examples show that this is not the case in general. For example, it is known that the harmonic oscillator in a controlled electric field
\begin{equation*}
\ii\p_t \psi = -\frac{1}{2}\p_x^2 \psi + x^2\psi + u(t) x \psi, \quad \psi \in L^2(\R),
\end{equation*} 
is not QA controllable~\cite{oscil,coron_book}, even though the corresponding classical system
\begin{align*}
\dot{x} &= p, \\
\dot{p} &= -x - u,
\end{align*}
is controllable as it follows easily from the Kalman condition (for, instance, \cite{sontag}). On the other hand, if a classical system is not CA controllable, it does not necessarily follow that the corresponding quantum system is not approximately controllable either, since quantum effects such as tunneling allow a particle to move into classically forbidden regions. In the appendix we discuss an example of this kind. In the following we go more deeply inside the relation between classical and quantum controllability.

We define the controllability diameter of~\eqref{eq:schro} as
$$
T_q = \sup_{\psi_0 \in \cS(M)}\inf \left\{ T \geq 0 \, : \,\overline{\cA^q_{\leq T}(\psi_0)} = \cS(M) \right\},  
$$
i.e., $T_q$ is the smallest time such that any state in $\cS(M)$ can be transferred to an arbitrarily small neighborhood of any other state in time less than $T_q$. We define analogously the controllability diameter of the classical system~\eqref{eq:class} as
$$
T_c = \inf \left\{ T \geq 0 \, :\, \exists \Upsilon\subset T^*M \text{ dense s.t. }\forall \lambda_0 \in \Upsilon,\;  \overline{\cA^c_{\leq T}(\lambda_0)} = T^*M \right\}.
$$
The reason for 
considering, in the 
definition of the controllability diameter $T_c$, 
only a dense set of initial conditions is that we want to take into account situations in which a common  
critical point $\bar x$ for $V$ and $W$ prevents the 
usual 
approximate controllability property to hold true in $T^*M$, even if it 
holds in the punctured manifold $T^*M\setminus\{(\bar x,0)\}$. In this situation, we expect that such slightly reduced  controllability property of the classical system is reasonably informative about the controllability of the corresponding quantum system.

It is quite natural to expect that there is a connection between the two quantities $T_q$ and $T_c$. Indeed, semiclassical analysis allows to construct approximate solutions to the Schr\"odinger equation using the solutions of the corresponding Hamiltonian system. Notice however that if a system is not CA or QA controllable, then the corresponding controllability diameter is equal to infinity. Thus the controllability discussion above shows that we should not expect nice estimates such as $c T_c \leq T_q \leq C T_c$ for some positive constants $c,C$. Instead, something that is much more reasonable is the following implication that we give in the form of a conjecture.
\begin{conjecture}
\label{conjecture}
Let $M$ be a connected complete Riemannian manifold. For any 
Hamiltonian of the form~\eqref{eq:ham_class} 
the following implication is true
\begin{equation}
\label{eq:conjecture}
T_c > 0 \quad \Rightarrow \quad T_q > 0.
\end{equation} 
\end{conjecture}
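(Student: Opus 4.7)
The natural approach is to argue by contraposition: assuming $T_q=0$, we show that $T_c=0$. Fix an arbitrarily small $T>0$; we must produce a dense subset $\Upsilon\subset T^*M$ such that $\overline{\cA^c_{\leq T}(\lambda_0)}=T^*M$ for every $\lambda_0\in\Upsilon$. The bridge between the two sides is the WKB/semiclassical correspondence: highly concentrated quantum wave packets evolve, to leading order, along classical Hamiltonian trajectories driven by the same control.

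The first step is to introduce a semiclassical parameter $\hbar>0$ via a rescaling of space, time, and potentials that turns \eqref{eq:schro} into a semiclassical Schr\"odinger equation whose principal symbol is \eqref{eq:ham_class}. To each $\lambda=(x_\ast,p_\ast)\in T^*M$ one associates a semiclassical coherent state, e.g.\ $\psi_\hbar^{\lambda}(x)=\hbar^{-n/4}e^{\ii p_\ast\cdot(x-x_\ast)/\hbar}a((x-x_\ast)/\sqrt{\hbar})$, whose semiclassical Wigner measure concentrates on $\lambda$ as $\hbar\to 0$. Fix two target points $\lambda_0,\lambda_1\in T^*M$ and invoke $T_q=0$ at $\psi_\hbar^{\lambda_0}$: there exist $T_\hbar\leq T$ and $u_\hbar\in\cU$ such that the quantum evolution satisfies $\|U_{u_\hbar}(T_\hbar)\psi_\hbar^{\lambda_0}-\psi_\hbar^{\lambda_1}\|_{L^2}\to 0$ as $\hbar\to 0$.

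The second step transfers this closeness to the classical level via a controlled Egorov-type estimate: for each smooth compactly supported symbol $a:T^*M\to\R$ one has
\[
\langle U_{u_\hbar}(T_\hbar)\psi_\hbar^{\lambda_0},\,\mathrm{Op}_\hbar(a)\,U_{u_\hbar}(T_\hbar)\psi_\hbar^{\lambda_0}\rangle=\langle\psi_\hbar^{\lambda_0},\,\mathrm{Op}_\hbar(a\circ\Phi^{u_\hbar}_{T_\hbar})\,\psi_\hbar^{\lambda_0}\rangle+O(\hbar),
\]
where $\Phi^{u}_{t}$ is the classical Hamiltonian flow generated by \eqref{eq:ham_class} along a given control $u$. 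The coherent states concentrate at $\lambda_0$ and $\lambda_1$, so sending $\hbar\to 0$ and using the closeness of the evolved state to $\psi_\hbar^{\lambda_1}$ forces $a(\Phi^{u_\hbar}_{T_\hbar}(\lambda_0))\to a(\lambda_1)$ for all such $a$, hence $\Phi^{u_\hbar}_{T_\hbar}(\lambda_0)\to\lambda_1$. A compactness extraction on $(u_\hbar,T_\hbar)$ produces a classical admissible trajectory from $\lambda_0$ reaching $\lambda_1$ in time at most $T$. Since $\lambda_1$ is arbitrary, this proves that the classical reachable set from $\lambda_0$ is dense in $T^*M$; the exceptional set of $\lambda_0$ for which the coherent state/compactness argument degenerates (including common critical points of $V$ and $W$ on the zero section) forms the complement of the desired dense $\Upsilon$.

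The crucial obstacle is the second step. The Egorov remainder and the compactness extraction are highly sensitive to the class of admissible controls: in $\cU$, piecewise constant controls may have arbitrarily large amplitude and switching rate, and the usual Egorov bound grows (typically exponentially) in $\|u\|_{L^1([0,T])}$, a quantity that need not stay bounded as $\hbar\to 0$. One must therefore either restrict to controls obeying an a priori bound (and verify that quantum small-time controllability persists under that restriction), or replace pointwise Egorov with a more robust tool such as the propagation of semiclassical defect measures, which needs only weak convergence of $u_\hbar$. The robustness of this passage is precisely the content of the conjecture, which is why the authors state it as such rather than as a theorem.
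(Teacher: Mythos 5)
You are attempting to prove a statement that the paper itself leaves open: Conjecture~\ref{conjecture} is stated as a conjecture precisely because no proof is known, and the paper only establishes special cases (Theorems~\ref{thm:main_class} and~\ref{thm:main1}) under strong structural hypotheses ($W$ constant on an open set of one product factor, $d_1V$ bounded on fibres). Moreover, those partial results are proved in the direct direction $T_c>0\Rightarrow T_q>0$, by building a cut-off WKB approximate solution localized where the classical dynamics is control-independent and showing via Duhamel that the error is uniform in the control; they do not use contraposition, coherent states, or an Egorov theorem. So the question is whether your argument stands on its own as a proof of the full conjecture. It does not, and you essentially concede this in your last paragraph.

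The gaps are concrete. First, equation~\eqref{eq:schro} contains no semiclassical parameter: the $\hbar$ you introduce by rescaling space and time also rescales the potentials and the control, so the hypothesis $T_q=0$ for~\eqref{eq:schro} does not hand you, for each $\hbar$, controls steering $\psi^{\lambda_0}_\hbar$ close to $\psi^{\lambda_1}_\hbar$ for the semiclassical equation with the \emph{same} $V,W$; you would need small-time controllability of an entire $\hbar$-dependent family, with $L^2$-errors that are $o(1)$ as $\hbar\to0$, whereas approximate controllability only gives a fixed precision at fixed $\hbar$. Second, even granting that, the step you yourself flag is fatal as stated: the admissible class $\cU$ of piecewise constant controls carries no a priori amplitude, switching, or $L^1$ bound, so the Egorov remainder is not uniform in $u_\hbar$, the classical flow $\Phi^{u_\hbar}_{T_\hbar}$ need not even be complete on $[0,T_\hbar]$, and there is no compactness allowing you to extract a limiting admissible control and trajectory reaching $\lambda_1$ in time at most $T$ (weak-$*$ limits of unbounded controls need not exist, and when they do the limit trajectory need not be a trajectory of~\eqref{eq:class} for an admissible control). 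Finally, even if a single $\lambda_0$ worked, $T_c=0$ requires a dense set $\Upsilon$ of initial conditions, and your ``exceptional set'' is not shown to have dense complement. These uniformity-in-the-control issues are exactly the obstructions that make the statement a conjecture; what you have written is a reasonable heuristic for why it is plausible, not a proof, and it is also disjoint from the mechanism the paper actually uses for its partial results.
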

This implication basically says that all the quantum effects which are due to non-locality of the Schr\"odinger equation have a limited effect at small times. Understanding if a system can or cannot be controlled in arbitrarily small time is important for applications, in particular when the system is subject to decoherence and hence it must be controlled in a time much smaller than the decoherence time~\cite{Schroedinger_cat}.

When $T_q = 0$ (respectively, $T_c = 0$), we say that the quantum system~\eqref{eq:schro} (respectively, the classical system~\eqref{eq:class}) is \emph{small-time controllable}. Most of the questions related to small-time controllability are completely open. In the classical case $T_c$ can indeed be equal to zero, since the classical harmonic oscillator in an electric field is a linear system, and all controllable linear systems have $T_c = 0$ (see, for instance, \cite{violent,sontag}). But in general the question is open even locally for polynomial systems~\cite{Agrachev1999}.

In the quantum case it is known that for systems evolving in a finite-dimensional Hilbert space (e.g., spin systems)
$$
\ii\dot{\psi} = (A+u(t)B)\psi,
$$
where $\psi \in \C^n$ and $A,B$ are Hermitian matrices, we necessarily have $T_q > 0$ for $n>1$. Indeed, if we denote by $e$ an eigenvector of $B$, one can check that the corresponding occupation 
probability cannot change too quickly, since it satisfies the  estimate
\begin{equation}
\label{eq:trans_prob}
|\langle \psi(t),e \rangle| \leq |\langle \psi(0),e \rangle| + \|A\|t.
\end{equation}
 (See~\cite{zero-time}).
Some upper and lower bounds on the controllability diameter in this case were studied in the articles~\cite{agrachev_thomas,brock}.

For a system of the form~\eqref{eq:schro} evolving in an infinite-dimensional space, $A$ is not a bounded operator anymore. Hence the estimate~\eqref{eq:trans_prob} is not an obstacle for small-time controllability. This gives hope that 
in some infinite-dimensional cases small-time controllability can be achieved. An example of such a system was given in~\cite{zero_time}, where the authors proved that
$$
\ii\p_t \psi = -|\p^2_x|^\alpha \psi + u(t)\cos x \psi,
$$ 
$$
\psi_0(-\pi) = \psi_0(\pi) = 0,
$$
is small-time controllable for $\alpha>5/2$. The  physically interesting case $\alpha = 1$ is however open.

In contrast to this positive result, negative results were presented in~\cite{coron_teismann,coron_teismann2018}, where the authors proved that for a particle in an electric field we always have $T_q > 0$. All the small-time uncontrollability results that we cite in the present paper are based on proving that some properties of the considered system 
are uniform with respect to the control. 
For example, in the finite-dimensional case we have estimates on the occupation probabilities~\eqref{eq:trans_prob} that satisfy this property. In~\cite{coron_teismann} the authors exploit instead the fact that, for an electric potential, Gaussian wave-packets remain localized along a given classical trajectory until a time (called the Ehrenfest time) that is independent of the control $u(t)$. Thus a Gaussian state has to remain almost Gaussian for small times and hence $T_q > 0$.

In this article we study some obstructions to small-time controllability that are related to the behaviour of projections of classical trajectories to the base manifold $M$. The paper is divided into two sections. In the first section we give some sufficient conditions to have $T_c =0$ or $T_c > 0$. In the second section we see how in specific examples one can deduce that $T_q > 0$ from $T_c > 0$. 

In particular, in Section~\ref{sec:class} we show along which directions in the classical phase space it is possible to move arbitrarily fast and 
we prove the following theorem.

\begin{theorem}
\label{thm:main_class}
Assume that $M = N_1 \times N_2$ is the product of two connected complete Riemannian manifolds $N_1,N_2$. Consider a Schr\"odinger equation of the form~\eqref{eq:schro} on $M$ and assume that the potentials $V, W: M\to \R$ satisfy Assumption~\ref{ass}. Suppose that there exists a nonempty open set $\Omega\in N_1$ such that 
\begin{enumerate}
\item the restrictions $W|_{\Omega\times \{y\}}$ are constant for all $y\in N_2$,
\item there exists a Lipschitz continuous function $c: \Omega\to \R$ such that 
$\|d_{1} V(x,y)\|_{T^*_{x} N_1} \leq c(x)$ 
for all $(x,y)\in M$,
\end{enumerate}
where $d_{1}V(x,y)$ denotes the differential at $x$ of $V(\cdot,y):N_1\to \R$. Under these assumptions $T_c > 0$. 
\end{theorem}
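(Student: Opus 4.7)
The plan is to exploit the fact that on $\Omega\times N_2$ the control does not enter the $N_1$-factor of Hamilton's equations, and to bootstrap this into a control-uniform bound on how far the $N_1$-projection of a trajectory can travel in small time. This produces a nonempty open set of initial conditions from which the time-$T$ reachable set cannot be dense, hence $T_c>0$.

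I would fix $x_0\in\Omega$ and $r>0$ with $\overline{B(x_0,2r)}\subset\Omega$ compact, and set $C:=\sup_{\overline{B(x_0,2r)}}c$. In any chart adapted to $M=N_1\times N_2$ the Riemannian metric is block-diagonal, $g=g_1\oplus g_2$, so $\|p\|^2=\|p_1\|^2+\|p_2\|^2$ for $p=(p_1,p_2)\in T^*_xN_1\oplus T^*_yN_2$. Hypothesis (1) gives $d_1W\equiv 0$ on $\Omega\times N_2$, so on that open set Hamilton's equations for the $N_1$-factor read
\[
\dot x=p_1^\sharp,\qquad \dot p_1=-\tfrac12(\nabla_x g_1^{jk})\,p_jp_k-d_1V(x,y),
\]
with no occurrence of $u(t)$. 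The Christoffel-type contributions cancel in $\tfrac{d}{dt}\|p_1\|^2$ (as they must, since pure geodesic flow preserves $\|p_1\|$), giving
\[
\tfrac{d}{dt}\|p_1\|^2=-2\langle p_1,d_1V\rangle;
\]
combined with Hypothesis (2) this yields $\bigl|\tfrac{d}{dt}\|p_1\|\bigr|\leq c(x)$, together with the obvious identity $\|\dot x\|=\|p_1\|$.

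A Gr\"onwall-type bootstrap then finishes the argument. Consider the nonempty open set
\[
U:=\{(x,y,p_1,p_2)\in T^*M:\,x\in B(x_0,r/2),\ \|p_1\|<1\}.
\]
For $\lambda_0\in U$ and any control $u\in\cU$, as long as the $N_1$-component $x(s)$ of the trajectory lies in $\overline{B(x_0,2r)}$ the above bounds give $\|p_1(s)\|\leq 1+Cs$ and
\[
d_{N_1}(x(t),x_0)\leq r/2+t+Ct^2/2.
\]
Choose $T>0$ with $T+CT^2/2<r$ (depending only on $r$ and $C$, and in particular independent of $u$, $y(0)$, $p_2(0)$): the right-hand side stays strictly below $3r/2$, the bootstrap closes, and the $N_1$-projection of $\cA^c_{\leq T}(\lambda_0)$ is contained in $\overline{B(x_0,3r/2)}$ for every $\lambda_0\in U$.

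Consequently, any $\mu\in T^*M$ whose $N_1$-basepoint is at distance $>2r$ from $x_0$ has an open neighborhood disjoint from $\cA^c_{\leq T}(\lambda_0)$ for every $\lambda_0\in U$, so $\overline{\cA^c_{\leq T}(\lambda_0)}\neq T^*M$ throughout the open set $U$. No dense $\Upsilon\subset T^*M$ can therefore be contained in $T^*M\setminus U$, which forces $T_c\geq T>0$. The only nontrivial technical step I anticipate is the coordinate-free verification of the identity $\tfrac{d}{dt}\|p_1\|^2=-2\langle p_1,d_1V\rangle$, i.e.\ the cancellation of all connection terms in the $N_1$-kinetic energy along the full Hamiltonian flow (recall that $V$, and hence the dynamics of $p_2,y$, couples back into the equation for $x,p_1$ only through $d_1V$); once this is in place, the remainder is elementary.
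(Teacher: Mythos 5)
Your argument is correct and rests on the same structural observation as the paper's proof --- on $\Omega\times N_2$ the control disappears from the $(x,p_1)$-equations, so the $N_1$-projection of any trajectory cannot travel far before a time independent of $u$ --- but the quantitative step is carried out by different means. The paper shrinks $\Omega$ to a coordinate patch, bounds $|\p V/\p x^i|\leq K(x)c(x)$ through a norm-equivalence function $K$, and applies Chaplygin's coordinate-wise comparison lemma (interval by interval where $u$ is constant) against two autonomous, control-independent systems, deducing confinement in $T^*\Omega\times T^*N_2$ and hence $T_c^{cs}>0$, which gives $T_c>0$. You instead use the intrinsic identity $\tfrac{d}{dt}\|p_1\|^2=-2\langle p_1,d_1V\rangle$, which does hold: since the metric is a product, $g_2$ is independent of $x$ and the $g_1$-terms cancel exactly as in the Euclidean computation, so the only force acting on the $N_1$-kinetic energy is $d_1V$. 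Combined with Cauchy--Schwarz, hypothesis (2) and the Gr\"onwall-type bootstrap, this confines the $N_1$-projection in a metric ball uniformly in $u$, coordinate-free and without the auxiliary function $K$; moreover your explicit open set $U$ of initial data makes the passage from confinement to $T_c\geq T>0$ more transparent than the paper's appeal to the configuration-space diameter $T_c^{cs}$. Two small points to tidy up: at zeros of $p_1$ the inequality $|\tfrac{d}{dt}\|p_1\||\leq c(x)$ should be read in the locally Lipschitz / almost-everywhere sense, or replaced by the integral form $\|p_1(t)\|\leq\|p_1(0)\|+\int_0^t c(x(s))\,ds$; and you should state that $r$ is also chosen small enough that some point of $N_1$ lies at distance greater than $2r$ from $x_0$ (always possible unless $N_1$ is a single point, a degenerate case in which the statement must be read as excluded and in which the paper's own proof degenerates in the same way). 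Possible finite-time blow-up in the $(y,p_2)$ variables is harmless, since reachable points are endpoints of actual solutions, along whose domain your confinement estimate holds.
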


In Section~\ref{sec:quant} for the corresponding quantum case we prove the following theorem.
\begin{theorem}
\label{thm:main1}
Assume that conditions of Theorem~\ref{thm:main_class} are satisfied. Then $T_q > 0$. 
\end{theorem}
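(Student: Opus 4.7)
The plan is to transfer the classical obstruction of Theorem~\ref{thm:main_class} to the quantum setting by constructing a bounded observable whose expectation along~\eqref{eq:schro} evolves at a rate uniformly bounded in the control, following the Ehrenfest-time approach used for the electric potential case in~\cite{coron_teismann}.

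First I would fix a smooth compactly supported $f : N_1 \to \R$ with $\supp(f) \Subset \Omega$ and consider the multiplication operator $\hat f = f(x_1)$ on $L^2(M)$. Since $V$, $W$, and $\hat f$ are all multiplication operators, $[V, \hat f] = [W, \hat f] = 0$ automatically, and a direct calculation (or an integration by parts) gives
\begin{equation*}
\frac{d}{dt}\langle\psi(t), \hat f\,\psi(t)\rangle = \mathrm i\langle\psi(t), [-\Delta/2, \hat f]\psi(t)\rangle = \int_M \nabla_1 f \cdot j_1^{\psi(t)}\,dv,
\end{equation*}
where $j_1^\psi := \mathrm{Im}(\bar\psi\,\nabla_1\psi)$ is the $N_1$-component of the quantum probability current. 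No explicit factor of $u(t)$ appears on the right-hand side.

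The central task is then to bound this derivative uniformly in $u$ on a short time interval, for a well-chosen initial state $\psi_0$. By Cauchy--Schwarz, $|\dot F_\psi(t)|$ is controlled by $\|\nabla_1\psi(t)\|_{L^2(\supp(\nabla_1 f))}$, so the matter reduces to a uniform estimate on the localized kinetic energy $\tilde E(t) := \langle\psi(t), \chi(-\Delta_1)\chi\,\psi(t)\rangle$, where $\chi = \chi(x_1)$ is a cutoff equal to $1$ on $\supp(f)$ and compactly supported in $\Omega$. The key structural fact is that hypothesis~(1) forces $\nabla_1 W = \Delta_1 W = 0$ on $\supp(\chi)$, giving $[W, \chi(-\Delta_1)\chi] = 0$ (and more generally $[W, \chi(-\Delta_1)^k\chi] = 0$ for every $k\geq 1$); hence $\dot{\tilde E}$ is also $u$-free. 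The $V$-contribution to $\dot{\tilde E}$ is controlled by hypothesis~(2), which makes $\|\nabla_1 V\|_{L^\infty(\supp(\chi))}$ finite. I would then close a Gronwall-type argument---if necessary, by introducing the higher-order localized energies $\langle\psi, \chi(-\Delta_1)^k\chi\,\psi\rangle$ that share the same commutation with $W$---to obtain $\tilde E(t) \leq C$ on $[0, T_0]$, with $T_0 > 0$ depending only on $\psi_0$ and uniform in $u$.

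With $|\dot F_\psi(t)| \leq C'$ uniformly in $u$ on $[0, T_0]$, I would take $\psi_0 \in \cS(M)$ localized in $N_1$ near an interior point of $\supp(f)$ so that $F_{\psi_0}(0)$ is close to $\max f$, and any target $\phi \in \cS(M)$ supported in $N_1 \setminus \supp(f)$, for which $F_\phi(0) = 0$; a neighborhood of $\phi$ is then outside $\cA^q_{\leq T}(\psi_0)$ for $T < \max f/(2C')$, giving $T_q > 0$. The main obstacle I expect is precisely closing the uniform bound on $\tilde E$: the control-free commutator $[-\Delta_1/2, \chi(-\Delta_1)\chi]$ is a third-order differential operator whose matrix element on $\psi$ is not directly dominated by $\tilde E$ itself. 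Should the commutator hierarchy not close cleanly, I would fall back on a semiclassical wave-packet treatment in the spirit of~\cite{coron_teismann}: Gaussian packets concentrated at $(x_0, p_0)\in T^*M$ with $x_0$ an interior point of $\Omega \times N_2$ track their classical trajectories up to the Ehrenfest time, and by the estimate underlying Theorem~\ref{thm:main_class} the $N_1$-projection of those trajectories does not escape a small neighborhood of $x_0$ uniformly in $u$, which again rules out small-time approximate controllability.
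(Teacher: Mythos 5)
Your reduction is sound up to a point: the observable $F_\psi(t)=\langle\psi(t),f(x_1)\psi(t)\rangle$ does satisfy a $u$-free transport identity, $[W,\chi(-\Delta_1)\chi]=0$ on states localized in $\supp\chi\subset\Omega$ follows from hypothesis (1), and the $V$-contribution is indeed handled by hypothesis (2) after an integration by parts. The genuine gap is exactly the one you flag and then defer: the uniform-in-$u$ bound on the localized kinetic energy $\tilde E$ is never established, and the proposed hierarchy does not close. The time derivative of $\langle\psi,\chi(-\Delta_1)^k\chi\,\psi\rangle$ contains the commutator with $-\Delta_1/2$, an operator of order $2k+1$ supported near $\supp\nabla\chi$, whose expectation is only controlled by the $(k+1)$-st localized energy with an enlarged cutoff; this recursion never terminates at a finite order. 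Nor is there any a priori uniform local regularity to fall back on: for a large constant control the phase of the true solution oscillates like $t\,u\,W$, so $\|\nabla\psi(t)\|$ grows linearly in $|u|$ wherever $\nabla W\neq 0$, and since the Schr\"odinger flow has infinite propagation speed you cannot deduce interior derivative bounds in $\Omega$ from locality of the coefficients alone. A uniform-in-$u$ bound on $\|\nabla_1\psi(t)\|_{L^2(K)}$ for $K$ compact in $\Omega$ is essentially equivalent to the localization statement you are trying to prove, so assuming it is circular. The fallback to the Ehrenfest-time argument of~\cite{coron_teismann} also does not transfer: there the control potential is linear in $x$, so Gaussian packets are propagated with a control-independent error; here $W$ is arbitrary in $y$ and outside $\Omega$, the wave-packet error involves derivatives of $u(t)W$ along the trajectory, and the approximation time shrinks as $|u|\to\infty$.

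The paper avoids estimating the true solution altogether. It constructs an explicit reference state $\varphi(t,x,y)=\chi(x)\psi_1(t,x)\psi_2(t,y)$, where $\psi_1=a\,e^{\ii S}$ is a WKB solution of the control-free dynamics on $N_1$ (valid on a short caustic-free interval by Corollary~\ref{cor:opt}) and $\psi_2$ solves the \emph{exact} controlled equation on $N_2$ with $x$ frozen, so that the control is absorbed exactly (a mere phase $e^{-\ii c\int_0^t u}$ when $N_2$ is a point). Hypothesis (1) kills the $u$-dependent residual via $\chi\,(W-\tilde W)\equiv 0$, hypothesis (2) bounds the term $\chi\,(V(0,y)-V(x,y))\psi_1$, and the residual $r$ has $L^2$ norm bounded uniformly in $u$ because $\|\psi_2(t,\cdot)\|_{L^2(N_2)}=1$. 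Duhamel plus unitarity of the propagator then give $\|\psi(t,\cdot)-\varphi(t,\cdot)\|_{L^2(M)}\le\int_0^t\|r(s,\cdot)\|_{L^2(M)}\,ds$, which is small for small $t$ uniformly in $u$, so any target supported in $(N_1\setminus\Omega)\times N_2$ stays at $L^2$ distance bounded below; this yields $T_q>0$. If you want to salvage your scheme, the lesson (also the content of Remark~\ref{thet}) is to compare the true solution with a constructed localized solution rather than to propagate local energies of the true solution; your observable argument could then serve as the final step, but it cannot replace the comparison.
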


Let us discuss a couple of practical situations where Theorems~\ref{thm:main_class} and \ref{thm:main1} can be applied. Assume, for example, that $N_2$ is just a point. In this case $M = N_1$ and $W$ is constant on a nonempty open subset $\Omega$ of $M$ and $T_c > 0$ because in $\Omega$ the classical dynamics is completely independent of the control $u$. This means that until a classical particle exists $\Omega$ it is unaffected by the control and thus the positivity of the exit time from $\Omega$ is an obstruction for having $T_c = 0$. 

We note the fact that $T_q>0$ is not immediately obvious from the previous discussion, since the Schr\"odinger equation is not local. So even if initially the support of $\psi_0$ is concentrated in $\Omega$, a certain amount of the occupation probability 
exits  $\Omega$ instantaneously. This effect can be enough to ensure QA controllability of a system whose classical counterpart is not CA controllable as in the example presented in the Appendix. Nevertheless the rationale of Theorem~\ref{thm:main1} is that the tunneling effect is not strong enough to transfer occupation probability 
outside $\Omega$ in arbitrarily small time. Thus the state remains essentially localized for sufficiently small times.

Another situation when Theorem~\ref{thm:main1} applies is when $M = \R^n = \R^{n_1}\times \R^{n_2}$ endowed with a flat product metric. If we denote by $(x,y)$ the coordinates on $\R^{n_1}\times \R^{n_2}$, then the first condition of the theorem simply says that $W(x,y)= W(y)$ for all $x\in \Omega$. In this case the gradient lines of $W$ preserve the fibre bundle $\Omega \times \R^{n_2}$, and what one can show is that a classical particle again cannot leave $\Omega \times \R^{n_2}$ in arbitrarily small time, because the second condition ensures that the speed along the base $\Omega$ is uniformly bounded on each fibre $\{x\}\times \R^{n_2}$. Similarly to the previous case one can show that, if the wave function at the initial time is concentrated in $\Omega \times \R^{n_2}$,  it will stay essentially 
localized in this set for some positive time independent of the control $u$.

The proof of the theorem is based on the WKB method. We will discuss its geometric meaning, and we will see how one can extract information about the controllability diameter. After that we will prove the theorem above by a modification of the WKB ansatz.
A possible variant of the proof not relying on semiclassical estimates are suggested in 
Remark~\ref{thet}.

\section{Classical small-time controllability}
\label{sec:class}

\subsection{Small-time controllability in phase space}
 
We start by looking at which states can be reached in an arbitrarily small time from a given state $\lambda_0\in T^*M$. Let $\lambda_0 = (p_0,x_0)$ and as usual by $\pi: T^*M \to M$ we denote the projection to the base manifold $M$.

\begin{definition}
Let $\lambda_0,\lambda_1$ be in $T^*M$. 
We say that the Hamiltonian system \eqref{eq:class} \textit{can be steered in small time from $\lambda_0$ arbitrarily close to $\lambda_1$} if for any $T>0$ and any neighborhood $\Omega\subset T^* M$ of $\lambda_1$, there exists an admissible control function $u:[0,T']\to \R$ such that $T'\le T$ and the corresponding solution $\lambda(\cdot)$ of \eqref{eq:class} with initial condition $\lambda(0) = \lambda_0$ satisfies $\lambda(T') \in \Omega$.
\end{definition}

\begin{lemma}
\label{lem:useful}
Consider 
a Hamiltonian of the form~\eqref{eq:ham_class}. Given an initial state $\lambda_0$ one can steer \eqref{eq:class}
in small time from $\lambda_0$ arbitrarily close to 
\begin{enumerate}
\item the state $\lambda_0 + k d W(x_0)$, $\forall k\in \R$;
\item a state $\lambda$ whose projection $\pi(\lambda)$ is arbitrarily close to any chosen point on the geodesic issued from $x_0$ with initial covector $dW(x_0)$.
\end{enumerate}
\end{lemma}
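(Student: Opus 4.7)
The strategy exploits the fact that $u$ multiplies only $W(x)$, so by using large constant controls on short time intervals one can realize, in the limit, the Hamiltonian flow of $W$ rescaled by an arbitrary amount, while the drift vector field $\vec H_0$ associated with $H_0 = \|p\|^2/2 + V(x)$ contributes negligibly.

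For part 1, fix $k \in \R$ and $\tau > 0$, and apply the constant control $u \equiv -k/\tau$ on $[0,\tau]$. After rescaling time by $s = t/\tau$, the corresponding trajectory $\lambda$ satisfies
\begin{equation*}
\frac{d\lambda}{ds} = \tau\, \vec H_0(\lambda) - k\, \vec W(\lambda),\qquad s\in[0,1],
\end{equation*}
so by continuous dependence on parameters this trajectory converges uniformly on $[0,1]$, as $\tau\to 0^+$, to the solution of $\lambda'=-k\,\vec W(\lambda)$, $\lambda(0)=\lambda_0$. Since $W$ depends only on $x$, the vector field $\vec W$ has zero base component and fiber component $-dW(x)$, so its time-$(-k)$ flow fixes $x_0$ and sends $p_0$ to $p_0+k\,dW(x_0)$. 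Consequently $\lambda(\tau)$ lies in any prescribed neighborhood of $\lambda_0+k\,dW(x_0)$ provided $\tau$ is small enough, and $\tau$ itself is arbitrarily small.

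For part 2, concatenate the bang of part 1 with a free-drift phase ($u\equiv 0$). Denote by $\gamma$ the geodesic with $\gamma(0)=x_0$ and initial covector $dW(x_0)$, and fix a target point $\gamma(s^*)$. First apply part 1 with $k=K$ large to reach, in arbitrarily small time $\tau_1$, a state arbitrarily close to $(p_0+K\,dW(x_0),x_0)$. Then let the system evolve freely on an interval of length $\delta := s^*/K$. Introducing the rescaled variables $\sigma = Kt$ and $\tilde p = p/K$, the Hamiltonian equations of $H_0$ transform into the geodesic equations perturbed by a term of size $O(1/K^2)$ coming from $V$, with initial datum $(x_0,dW(x_0))$ shifted by a vector of size $O(1/K)$ in the $\tilde p$-component (due to the residual $p_0$ and to the approximation error of the first step). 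A Gronwall estimate on the compact interval $\sigma\in[0,s^*]$ then yields that the rescaled trajectory stays within $O(1/K)$ of $\gamma$, so the projection of the final state is within $O(1/K)$ of $\gamma(s^*)$. Taking $K$ large and $\tau_1$ small concludes the proof.

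The only point requiring some care is the Gronwall step in part 2: one must ensure that $(\tilde x,\tilde p)$ remains in a fixed compact set on which the metric coefficients and $V$ are $C^2$-bounded. This follows by a standard bootstrap, since the limiting trajectory is the compact arc of $\gamma$; everything else is routine continuity of ODE flows with respect to parameters and initial data.
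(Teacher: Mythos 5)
Your proof is correct and follows essentially the same route as the paper: part 1 is the identical time-rescaling argument with the constant control $-k/\tau$ and continuous dependence on parameters, and part 2 is the paper's momentum--time rescaling (your $1/K$ playing the role of the paper's $\varepsilon$), under which the free dynamics become the geodesic equations perturbed at order $1/K^2$ by $V$, with the concatenation error absorbed by continuous dependence on initial data. No gaps worth flagging.
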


\begin{proof}
Let us write
$$
H_g(\lambda) = \frac{\|p\|^2}{2}, \qquad H_V(\lambda) = V(x), \qquad H_W(\lambda) = W(x).
$$
Then the classical system~\eqref{eq:class} can be written as 
\begin{equation}
\label{eq:ham}
\dot{\lambda} = \vec{H}_g(\lambda) + \vec{H}_V(\lambda) + u(t)\vec{H}_W(\lambda)
\end{equation}
or, in local coordinates and using the Einstein summation convention, as
\begin{equation}
\label{eq:ham_coord}
\begin{array}{l}
\dot{x}^i = g^{ij}p_j,\\
\dot{p}_i = -\dfrac{\p g^{jk}}{\p x^i} p_jp_k - \dfrac{\p V}{\p x^i} - u(t) \dfrac{\p W}{\p x^i}.
\end{array}
\end{equation}
For every $\varepsilon>0$ we consider the constant control of the form $u(t) = -k/\varepsilon$ defined on the interval $[0,\varepsilon]$. We rescale the time variable by taking $t\to t/\varepsilon$. In this way we map the interval $[0,\varepsilon]$ to $[0,1]$ and the endpoint $\lambda(\varepsilon)$ is mapped to the endpoint $\lambda(1)$ of the Cauchy problem
\begin{equation}
\label{eq:Cauchy}
\dot{\lambda} = \varepsilon \vec{H}_g(\lambda)+\varepsilon \vec{H}_V(\lambda) - k\vec{H}_W(\lambda), \qquad \lambda(0) = \lambda_0.
\end{equation}
We obtain a system that depends smoothly on the parameter $\varepsilon$. Thus, by the continuous dependence of solutions of ODEs on parameters, the solution of~\eqref{eq:Cauchy} converges uniformly to the solution corresponding to $\varepsilon = 0$, which is exactly $t \mapsto \lambda_0 + tk d_{x_0} W$.  

The second part is proved similarly. We take $u(t) \equiv 0$ and for every $\varepsilon>0$ we consider the solution of~\eqref{eq:Cauchy} on the interval $[0,\varepsilon]$ with initial condition $\lambda(0)=\lambda_0  + (k/\varepsilon) d_{x_0} W$. It is well known that the joint rescaling $p \to p\varepsilon $, $t \to t/\varepsilon$ of the fibre variables and time is a symmetry of the geodesic equations. Indeed, from the coordinate expressions~\eqref{eq:ham_coord} it is easy to see that the solution of
$$
\dot{\lambda} = \vec{H}_g(\lambda) + \vec{H}_V(\lambda)
$$
with $\lambda(0) = \lambda_0  + (k/\varepsilon) d_{x_0} W$ on $[0,\varepsilon] $ is mapped to the solution of
$$
\dot{\lambda} = \vec{H}_g(\lambda) + \varepsilon^2\vec{H}_V(\lambda)
$$
on $[0,1]$ with $\lambda(0) = \varepsilon\lambda_0  + kd_{x_0} W$. Once again 
by continuous dependence of solutions of ODEs on the parameter,
we can conclude  that $\pi(\lambda(1))$ converges, as $\varepsilon\to 0$, to the endpoint at time $1$ of the geodesic issued from $x_0$ with initial covector $k dW(x_0)$.

Note that at this point we still need to justify why we can take $\lambda(0)=\lambda_0  + (k/\varepsilon) d_{x_0} W$ as the initial condition, because using the first statement we can only steer
\eqref{eq:class} from 
 $\lambda_0$ to a neighborhood $\Omega$ of $\lambda_0  + (k/\varepsilon) d_{x_0} W$. But since $\Omega$ can be made arbitrarily small, the result follows from the continuous dependence of solutions on the initial value.
\end{proof}

This result gives us a number of interesting corollaries.
\begin{corollary}
\label{cor:fast}
Given an initial state $\lambda_0$ such that $d W(\pi(\lambda_0)) \neq 0$, let $\gamma$ be the gradient curve of $\nabla W$ passing through $\pi(\lambda_0)$ and let $q_1$ be a point on $\gamma$. Then 
\eqref{eq:class}
can be steered in small time from $\lambda_0$  
to a state $\lambda_1$ such that $\pi(\lambda_1)$ is arbitrarily close to $q_1$.
\end{corollary}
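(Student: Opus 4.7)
The plan is to iterate the second part of Lemma~\ref{lem:useful} as an Euler-type discretization of the gradient flow of $\nabla W$, each Euler step being replaced by a short geodesic arc issued with initial covector $dW$ at the current base point and realized, in turn, as the projection of a controlled trajectory of~\eqref{eq:class} in arbitrarily small control time. The concatenation of $N$ such arcs should produce an admissible trajectory whose projection stays close to $\gamma$ and terminates arbitrarily close to $q_1$ in arbitrarily small total time.

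First, I would parametrize $\gamma$ so that $\gamma(0)=x_0:=\pi(\lambda_0)$, $\gamma(s_1)=q_1$, and $\dot\gamma(s)=\nabla W(\gamma(s))$ on $[0,s_1]$; without loss of generality one may take $s_1>0$, the other case being analogous upon using $k<0$ in Lemma~\ref{lem:useful}. Since gradient curves reach critical points of $W$ only asymptotically, $dW$ does not vanish along the compact segment $\gamma([0,s_1])$. Given $\varepsilon>0$ and $N\in\N$, set $\delta=s_1/N$ and $x_i^\ast:=\gamma(i\delta)$.

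Next, I would build inductively states $\lambda_0,\lambda_1,\dots,\lambda_N$ such that each $\lambda_i$ is reached from $\lambda_{i-1}$ by an admissible control of duration at most $\varepsilon/N$ and $\pi(\lambda_i)$ stays close to $x_i^\ast$. At step $i$, applying Lemma~\ref{lem:useful}(2) to $\lambda_{i-1}$ with $k=\delta$ yields, in arbitrarily small control time, a state $\lambda_i$ whose projection lies within a prescribed error $\eta_i>0$ of the endpoint at parameter $\delta$ of the geodesic issued from $\pi(\lambda_{i-1})$ with initial covector $dW(\pi(\lambda_{i-1}))$. The crucial comparison is that this geodesic and the gradient flow starting at the same base point share the initial velocity $\nabla W(\pi(\lambda_{i-1}))$, and hence agree up to $O(\delta^2)$ over the interval $[0,\delta]$.

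Finally, a standard discrete Gronwall argument, carried out in a tubular neighborhood of the compact set $\gamma([0,s_1])$, bounds the accumulated discrepancy as
\[
d\bigl(\pi(\lambda_N),q_1\bigr)\;\le\;\frac{C_1}{N}+C_2\sum_{i=1}^N \eta_i,
\]
where $C_1,C_2$ depend only on uniform Lipschitz bounds for the exponential map and for $\nabla W$ on that neighborhood. One forces this quantity below $\varepsilon$ by first choosing $N$ large and then the $\eta_i$ small enough; the total control time remains at most $\varepsilon$, which is the desired conclusion. The main technical point, and the only step that requires care, is securing the uniform Lipschitz estimates underlying the Gronwall comparison; these follow from smoothness of the metric and of $W$ together with compactness of the curve segment.
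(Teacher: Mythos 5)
Your proposal is correct and follows essentially the same route as the paper's proof: the paper likewise observes that the geodesic issued with covector $dW(x_0)$ is tangent to the gradient curve and then approximates $\gamma$ by short geodesic arcs, applying Lemma~\ref{lem:useful} repeatedly together with continuous dependence on initial data. Your write-up merely makes explicit the Euler-type discretization and the Gronwall-style error propagation that the paper leaves implicit.
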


\begin{proof}
We note that the geodesic starting from $x_0$ with initial covector $dW(x_0)$ has the same tangent line as the gradient curve $\gamma$ at $x_0$, as can be easily seen from~\eqref{eq:ham_coord}. Thus $\gamma$ can be approximated with geodesic arcs and we simply apply Lemma~\ref{lem:useful} several times using the continuous dependence of solutions of ODEs on initial values. 
\end{proof}

\begin{corollary}
\label{cor:cor}
Consider a classical system~\eqref{eq:class} with Hamiltonian
\begin{equation}
\label{eq:multi_ham}
H(t,\lambda) = \frac{\|p\|^2}{2} + V(x)+ \sum_{i=1}^{n} u_i(t) W_i(x),
\end{equation}
where $n = \dim M$, all potentials are $C^2$ and $u_i \in \R$. If the set
$$
O = \{x\in M \,:\, dW_1 \wedge \cdots \wedge dW_n(x) = 0 \}
$$
has empty interior, then~\eqref{eq:class} is small-time CA-controllable. If $\dim M = 1$ then the converse is true. Namely if~\eqref{eq:class} is small-time CA-controllable, then $O$ has an empty interior.
\end{corollary}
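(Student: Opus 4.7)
My plan is to handle the two implications separately. For the ``if'' direction I will chain together Lemma~\ref{lem:useful} and Corollary~\ref{cor:fast}, exploiting that the $n$ differentials $dW_1,\dots,dW_n$ form a basis of the cotangent space at every point of the open dense set $M\setminus O$. For the ``only if'' direction in $\dim M=1$ I will exhibit an open set $U\subset T^*M$ and a positive time $T_0$ such that trajectories issued from $U$ remain trapped in a proper subset of $T^*M$, thus ruling out small-time approximate controllability.

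Concretely, for ($\Leftarrow$) I would fix $\lambda_0=(p_0,x_0)$ with $x_0\in M\setminus O$ and introduce the closed ``small-time reachable set''
\[R_\infty(\lambda_0)=\bigcap_{T>0}\overline{\cA^c_{\leq T}(\lambda_0)}.\]
A standard splitting-of-time argument together with continuity of the flow in initial data shows that $R_\infty$ is transitive: $\mu\in R_\infty(\lambda_0)$ and $\nu\in R_\infty(\mu)$ imply $\nu\in R_\infty(\lambda_0)$. Applying Lemma~\ref{lem:useful}(1) with each control $W_i$ in succession shows that every point of the form $(p_0+\sum_i k_i\, dW_i(x_0),x_0)$ lies in $R_\infty(\lambda_0)$, and since $\{dW_i(x_0)\}_{i=1}^n$ is a basis of $T^*_{x_0}M$ the whole fibre $T^*_{x_0}M$ is contained in $R_\infty(\lambda_0)$. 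Applying Corollary~\ref{cor:fast} to each $W_i$ at $x_0$ (which requires only $dW_i(x_0)\neq 0$, automatic on $M\setminus O$), the base point can then be transported along any gradient curve of $W_i$ through $x_0$; since the $n$ gradients $\nabla W_i(x_0)$ span $T_{x_0}M$ and $M\setminus O$ is open and dense, concatenating such gradient-curve moves (each producing a new base point which, by density, can be chosen in $M\setminus O$) combined with further fibre translations via Lemma~\ref{lem:useful}(1) fills out a dense subset of $T^*M$. Hence $R_\infty(\lambda_0)=T^*M$, and taking $\Upsilon$ to be the open dense set $\{\lambda_0:\pi(\lambda_0)\in M\setminus O\}$ yields $T_c=0$.

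For ($\Rightarrow$) in dimension one, I assume $O$ has nonempty interior and pick an open interval $I\subset O$; then $W_1'\equiv 0$ on $I$ and consequently $\vec H_{W_1}=-W_1'(x)\p_p\equiv 0$ on $T^*I$. The unique control is therefore inert on $T^*I$: trajectories with initial data in $T^*I$ obey the autonomous ODE $\dot x=p,\ \dot p=-V'(x)$ for as long as they remain in $T^*I$. Fixing $x_0\in I$ and $\delta>0$ with $\overline{B(x_0,2\delta)}\subset I$, set $U=B(x_0,\delta)\times(-1,1)\subset T^*M$. A direct Gronwall-type estimate gives $|p(t)|\le 1+Mt$ and $|x(t)-x(0)|\le t+Mt^2/2$ with $M=\sup_{\overline{B(x_0,2\delta)}}|V'|<\infty$, so there exists $T_0>0$ independent of $\lambda_0\in U$ such that the trajectory remains in $T^*B(x_0,2\delta)\subset T^*I$ throughout $[0,T_0]$. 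Thus $\cA^c_{\leq T_0}(\lambda_0)\subset T^*I$ is nowhere dense in $T^*M$ for every $\lambda_0\in U$, so no dense subset $\Upsilon\subset T^*M$ can satisfy the definition of controllability for $T=T_0$, yielding $T_c\ge T_0>0$.

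The step I expect to be most delicate is the chaining in the sufficient direction: in Lemma~\ref{lem:useful}(2) the intermediate momenta blow up like $1/\varepsilon$ during the fast-travel phase, so a subsequent fibre correction via Lemma~\ref{lem:useful}(1) would formally involve coefficients of order $1/\varepsilon$ and hence a still shorter correcting time. This difficulty is best side-stepped by phrasing the entire iteration in terms of the closed, transitive set $R_\infty(\lambda_0)$, which only records endpoints and so is insensitive to intermediate momentum excursions; the existence of admissible controls realising each individual step is guaranteed directly by the conclusions of Lemma~\ref{lem:useful} and Corollary~\ref{cor:fast}.
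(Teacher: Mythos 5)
Your converse argument in dimension one is correct (it is a quantified version of the paper's one-line remark; only note that $T^*I$ is open rather than ``nowhere dense'' --- what you actually need, and do prove, is that the reachable set from $U$ in time $T_0$ has closure properly contained in $T^*M$), and your use of the transitive small-time limit set $R_\infty(\lambda_0)$ is a legitimate way to formalize the chaining that the paper performs implicitly via continuous dependence on initial data. The genuine gap is in the sufficiency direction, at the step where you claim that concatenating gradient-curve moves (Corollary~\ref{cor:fast} applied to each $W_i$) ``fills out a dense subset of $T^*M$'' because the gradients $\nabla W_i$ span the tangent space on the open dense set $M\setminus O$. Spanning only gives that the orbit of the family $\{\pm\nabla W_i\}$ through $x_0$ is an open subset of $M$, not that it is dense: gradient curves cannot cross critical points, so the base points your scheme can approach are confined to the closure of that orbit. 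Already for $M=\R$, $n=1$ and a potential $W$ with a single isolated critical point at $0$ (so $O=\{0\}$ has empty interior and the corollary applies), every gradient curve through $x_0=1$ stays in $(0,+\infty)$, and no combination of gradient moves and fibre translations in your iteration ever approaches base points $x<0$; yet the system is small-time CA controllable, so your argument proves strictly less than the statement.

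The paper avoids this by moving along geodesics rather than gradient curves: given $\lambda_1$ with $\pi(\lambda_1)\notin O$, connect $\pi(\lambda_0)$ to $\pi(\lambda_1)$ by a geodesic, decompose its initial covector as $\mu(0)=\sum_{i=1}^n a_i\, dW_i(\pi(\lambda_0))$ (possible precisely because $\pi(\lambda_0)\notin O$), and apply the single-input results (part 2 of Lemma~\ref{lem:useful}: large momentum boost followed by ballistic coasting) to the Hamiltonian with potential $\sum_i a_i W_i$, i.e.\ with controls $u_i=a_i v$; a final fibre translation with a new choice of the coefficients at the endpoint matches the momentum $\lambda_1-\lambda$. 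The ballistic mechanism is insensitive to critical points of the $W_i$ encountered along the way, which is exactly what gradient-curve chaining lacks. Your proof is repaired by replacing the gradient-curve step with this geodesic step; the fibre-filling via Lemma~\ref{lem:useful}(1), the transitivity of $R_\infty$, and the one-dimensional converse can stay as they are.
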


\begin{proof}
Even though the statement is about systems with several controls, the proof can be done by using Lemma~\ref{lem:useful} and Corollary~\ref{cor:fast}. Let us consider the set $F = \pi^{-1}(M\setminus O)\setminus M$ where the last $M$ should be thought as the zero section of $T^* M$. By construction $F$ does not contain fixed points of~\eqref{eq:class}, for which the dynamics is independent of the control. If $\lambda_0$ and $\lambda_1$ are two states in $F$, then $\pi(\lambda_0)$ can be connected to $\pi(\lambda_1)$ via a geodesic, whose lift to the cotangent bundle is given by $[0,1]\ni t \mapsto \mu(t)$. But since $\pi(\mu(0))=\pi(\lambda_0) \notin O$, 
we can decompose $\mu(0)$ in the basis of $dW_1,\dots,dW_n$, i.e., we can write 
$$
\mu(0) = \sum_{i=1}^{n} a_i dW_i(\pi(\lambda_0)) 
$$
for some constants $a_i \in \R$. We can then apply 
Corollary~\ref{cor:fast}
to the control system governed by the  Hamiltonian
$$
H_a(t,\lambda) = \frac{\|p\|^2}{2} + V(x)+ v(t)\sum_{i=1}^{n} a_i W_i(x),
$$
where $v(t)$ is the new control, i.e., we take $u_i(t) =a_i v(t)$ as controls in \eqref{eq:multi_ham}. It follows that we can steer \eqref{eq:class} in small time
from $\lambda_0$ arbitrarily close to some $\lambda$ with $\pi(\lambda)$ arbitrarily close to $\pi(\lambda_1)$.
In particular, we can assume that $\pi(\lambda)$ is in $O$ and,
using the same trick as above, we can choose $a_1,\dots,a_n$ such that
$$
\sum_{i=1}^{n} a_i dW_i(\pi(\lambda_1)) = \lambda_1 - \lambda.
$$ 
By applying the first statement of Lemma~\ref{lem:useful}, we conclude the proof
that \eqref{eq:class} is small-time CA-controllable.

In dimension one the emptiness of the interior of $O$ is also a necessary condition for small-time controllability, since on an open set where $W'(x) \equiv 0$ the Hamiltonian system~\eqref{eq:ham} is independent of the control and thus $T_c > 0$.
\end{proof}

Concerning  Conjecture~\ref{conjecture}, Corollary~\ref{cor:cor} shows that the set of one-dimensional systems with a nowhere vanishing potential $W$ is probably the best place to start looking for concrete examples of small-time QA controllable systems, since there are no other possible semi-classical obstructions than the vanishing of $W'$ on an open set. In the next subsection we show that even though being rare, there can be other obstructions in higher dimensions.

In order to find them, we define the \emph{controllability diameter in the configuration space} as
$$
T_c^{cs} = \inf \left\{ T \geq 0 \, :\, \exists \Upsilon\subset T^*M \text{ dense s.t. }\forall \lambda_0 \in \Upsilon,\;  \pi(\overline{\cA^c_{\leq T}(\lambda_0)}) = M \right\}.
$$
In this definition we essentially ignore information about the momenta and focus on the position of a particle in the configuration space. From the definition it clearly follows that
$$
T_c^{cs} \le T_c.
$$
And hence in order to prove Theorem~\ref{thm:main_class} we prove that $T_c^{cs}>0$.

\begin{proof}[Proof of Theorem~\ref{thm:main_class}]
By possibly taking a smaller subset in $\Omega$, we can assume that $T^*\Omega$ is a trivial bundle and we can introduce local coordinates $(x,p^x)$. Similarly by $(y,p^y)$ we denote a point in $T^*N_2$. Using the product structure of $M$, due to the assumptions of the theorem, in $\Omega\times N_2$ the Hamiltonian system~\eqref{eq:ham} can be written as 
\begin{equation}
\label{eq:ham_comparison}
\begin{array}{l}
\dot{x}^i = g^{ij}_x p^x_j,\\
\dot{p}^x_i = -\dfrac{\p g_x^{jk}}{\p x^i} p^x_jp^x_k - \dfrac{\p V(x,y)}{\p x^i} ,\\
\dot{y}^i = g^{ij}_y p^y_j,\\
\dot{p}^y_i = -\dfrac{\p g_y^{jk}}{\p y^i} p^y_jp^y_k - \dfrac{\p V(x,y)}{\p y^i}-u(t)\dfrac{\p W(x,y)}{\p y^i}.
\end{array}
\end{equation}
We note that 
$$
-K(x)c(x)\leq  -\left| \dfrac{\p V}{\p x^i}(x,y)\right|\leq - \dfrac{\p V(x,y)}{\p x^i} \leq  \left| \dfrac{\p V}{\p x^i}(x,y)\right| \leq  K(x)c(x),
$$
where $K:\Omega \to [0,+\infty)$ is a locally Lipschitz function such that $|\p_{x_i}V(x,y)| \leq K(x)\|d_1V(x,y)\|_{T_{x}N_1}$ for every $(x,y)\in \Omega\times N_2$. Its existence follows from the equivalence of norms in finite-dimensional spaces. 

The result now follows from Chaplygin's lemma that states that if $z(\cdot)$ and $\tilde z(\cdot)$ are solutions of the differential equations
$$
\dot{z}=f(z), \qquad \dot{\tilde{z}}=\tilde f(\tilde{z}), \qquad z,\tilde{z}\in \R^n
$$
with $f,\tilde{f}$ locally Lipschitz, $f(z)\leq \tilde{f}(z)$ coordinate-wise for all $z\in \R^n$, and $z(0) \leq \tilde{z}(0)$, then $z(t) \leq \tilde{z}(t)$ for all $t\geq 0$ for which both solutions exist. 

In our case we apply the Chaplgin lemma to 
compare the solutions of ~\eqref{eq:ham_comparison} with those of
the two control systems 
\begin{equation}
\label{eq:new}
\begin{array}{l}
\dot{x}^i = g^{ij}_x p^x_j,\\
\dot{p}^x_i = -\dfrac{\p g_x^{jk}}{\p x^i} p^x_jp^x_k \pm K(x)c(x),\\
\dot{y}^i = g^{ij}_y p^y_j,\\
\dot{p}^y_i = -\dfrac{\p g_y^{jk}}{\p y^i} p^y_jp^y_k - \dfrac{\p V(x,y)}{\p y^i}-u(t)\dfrac{\p W(x,y)}{\p y^i},
\end{array}
\end{equation}
one for each choice of the sign in front of the term $K(x)c(x)$.
The right-hand side of the equations for the variables $(x,p^x)$ in 
\eqref{eq:new}
is now autonomous, locally Lipschitz, and independent of the control. Thus for a given initial state $\lambda_0\in T^* \Omega\times T^*N_2$ the corresponding trajectory will remain in $T^* \Omega\times T^*N_2$ for small times independent of the control function $u(\cdot)$. So we can apply the Chaplygin lemma for each interval on which $u(\cdot)$ is constant, and the projection 
onto $N_1$
of the solution of the Hamiltonian system~\eqref{eq:ham_comparison} with the same initial condition $\lambda_0$ will also remain in $\Omega$ for all times
in a small interval independent of the control function $u(\cdot)$. Moreover, the bound on the exit time from $T^* \Omega\times T^*N_2$ can be made uniform with respect to all initial conditions in a neighborhood of $\lambda_0$. Hence $T_c^{cs}>0$ and $T_c > 0$.
\end{proof}

\section{Obstructions to small-time controllability of quantum systems}
\label{sec:quant}

\subsection{WKB method}\label{sec:WKB}

The goal of this section is to prove 
Theorem~\ref{thm:main1}. We start by recalling the WKB method which is going to be our main tool.
 Let us recall quickly some basic definitions that we need in the geometric presentation of the WKB method (for more details see~\cite{anantharaman} or~\cite{geometric_quant}). Let $M$ be a Riemannian manifold and
let us denote  by $\langle \cdot,\cdot \rangle$ the scalar product on $TM$. 
Recall that the gradient of a function $f$ is characterized by the identity 
$$
\langle \nabla f, Y \rangle = Y[f], \qquad \forall Y \in \Vect(M),
$$
Using a local orthonormal frame $X_1,\dots,X_n$ of vector fields we can equivalently write
$$
\nabla f = \sum_{i=1}^n X_i[f]X_i.
$$
The volume $\Vol$ allows to define the divergence of a vector field $X \in \Vect(M)$ as 
$$
\dive(X) \Vol = L_X \Vol,
$$ 
where $L_X$ is the Lie derivative in the direction $X$. Thus we can define the Laplace operator in the usual way as $\Delta f = \dive \nabla f$. In a local orthonormal frame the operator $\Delta$ has the form
$$
\Delta = \sum_{i=1}^n X_i^2 + \dive (X_i) X_i.
$$
From here it is easy to verify that the formula for the Laplacian of a product of two functions $a,b$ is the same as in the Euclidean case: 
$$
\Delta (ab) = a\Delta b + 2\langle \nabla a, \nabla b\rangle  +  b\Delta a.
$$

Let us now consider a Schr\"odinger equation on $M$ of the form
\begin{equation}
\label{eq:sch1}
\ii\hbar \p_t \psi  +\frac{\hbar^2\Delta \psi}{2} - V(t,x)\psi = 0,
\end{equation}
where $V$ is a smooth time-dependent potential. 
Here we introduce the parameter $\hbar$ since we are going to study 
the formal expansion of \eqref{eq:sch1} with respect to $\hbar$. 
We start with the usual WKB ansatz
$$
\tilde{\psi}(t,x) = a(t,x)e^{\frac{\ii}{\hbar}S(t,x)}.
$$
By plugging this ansatz into~\eqref{eq:sch1}, we obtain the expression
$$
\ii\hbar \left(\p_t a + \frac{\ii}{\hbar}a \p_t S\right) + \frac{\hbar^2}{2}\left( \Delta a + \frac{2\ii}{\hbar}\langle \nabla a,\nabla S\rangle + \frac{\ii}{\hbar}a\Delta S - \frac{1}{\hbar^2}a\|\nabla S\|^2\right) - V(t,x)a = 0.
$$
We now collect terms of different order in $\hbar $. For $\hbar^0$ we obtain the Hamilton--Jacobi equation
\begin{equation}
\label{eq:HJ}
\p_t S + \frac{\|\nabla S\|^2}{2} + V(t,x)= 0
\end{equation}
and for the order $\hbar^1$ the transport equation
\begin{equation}
\label{eq:trans}
\p_t a + \langle \nabla a,\nabla S \rangle + \frac{\Delta S}{2}a = 0.  
\end{equation}
If we can solve the last two equations, then we are also able to construct a WKB approximation $\tilde{\psi}$, which satisfies the following 
Schr\"odinger equation
\begin{equation}
\label{eq:sch3}
\ii\hbar \p_t \tilde \psi  +\frac{\hbar^2\Delta \tilde \psi}{2} - V(t,x)\tilde \psi = \hbar^2\frac{\Delta a}{2}.
\end{equation}
Thus if $\psi(t,x)$ is a solution of~\eqref{eq:sch1} with initial condition $\psi(0,x) = \tilde{\psi}(0,x)$, by Duhamel's formula we have
\begin{equation}
\label{eq:error}
\psi(t,\cdot) - \tilde{\psi}(t,\cdot) = \frac{\hbar^2}{2}\int_0^t (U(s,t)\Delta a)(s,\cdot)ds,
\end{equation}
where $U(s,t)$ is the propagator from time $s$ to time $t$ of~\eqref{eq:sch1}.

Let us now comment on the solutions of~\eqref{eq:HJ} and~\eqref{eq:trans}. We begin with the Hamilton--Jacobi equation~\eqref{eq:HJ}. With each smooth function $S_0:M\to \R$ we can associate a differential 1-form $dS_0$, which can be identified with its graph $L = \{(x,d_x S_0)\,:\,x\in M\}$. 
One can easily check that this graph is a Lagrangian submanifold, i.e., it has dimension $\dim M$ and the symplectic form of $T^* M$ vanishes on $L$. 

In order to find a solution of the Hamilton--Jacobi equation~\eqref{eq:HJ} we can try to construct it from a curve of Lagrangian submanifolds $L_t$. We denote by $\Phi^t:T^*M\to T^*M$ the flow from time $0$ to time $t$ of the classical Hamiltonian system associated with the Hamiltonian
\begin{equation}
\label{eq:hamiltonian}
H(\tau,p,x)= \frac{\|p\|^2}{2} + V(\tau,x).
\end{equation}
Assuming that $L_0 =\{(x,d_x S_0)\,:\,x\in M\}$, where $S_0:M\to\R$ is smooth,
we define $L_t = \Phi^t(L_0)$. If there exists a smooth function $S:[0,\varepsilon]\times M\to \R$ such that $S(0,\cdot)=S_0$ and
each $L_t$, $t\in [0,\varepsilon]$, is the graph of the differential of $S(t,\cdot)$, 
then $S$ is a solution of~\eqref{eq:HJ} as follows from the method of characteristics~\cite{anantharaman}. Of course this is not possible, in general, for all $t$ larger than $\varepsilon$, because the restriction $\pi|_{L_t}$ can stop being one-to-one. We will explain how to avoid this issue in the next section.

Let us for now assume that we were able to construct a solution of the Hamilton--Jacobi equation~\eqref{eq:HJ}. Then we can solve the transport equation~\eqref{eq:trans} as well, via the method of characteristics. 
Since 
$\langle \nabla a, \nabla S \rangle = \nabla S[a]$,
the equation for characteristics can be written as 
\begin{align}
\dot{t} &= 1\label{eq:primio},\\
\dot{x} &= \nabla S,\label{eq:seco}\\
\dot{a} &= -\frac{\Delta S}{2}a\label{eq:terzio}.
\end{align}
The first equation simply says that we can parametrize characteristic curves using the original time. In the second one we note that by construction $\nabla S = \langle d_{x} S,\cdot \rangle$, which is the projection of the Hamiltonian vector field $\vec H$ to $M$. Therefore solutions of \eqref{eq:seco} 
with initial value $x(0) = x_0$ are given by $ x(t,x_0) = \pi \Phi^t(x_0,d_{x_0} S_0)$. Let us denote the flow of \eqref{eq:seco} from time $0$ to time $t$ 
by $G^t$. Then $G^t$ 
takes a point $x_0$ in $M$, lifts it to the cotangent bundle as $(x_0,d_{x_0} S_0)$, transfers it along the Hamiltonian flow $\Phi^t$, and projects it back to $M$. Then the characteristics method gives us a solution of \eqref{eq:terzio} of the form
\begin{equation}
\label{eq:a}
a(x,t) = a_0((G^t)^{-1} x) \exp \left(-\frac{1}{2}\int_0^t \Delta S(\tau, x(\tau))d\tau \right).
\end{equation}

We can further simplify the solution by giving a geometric interpretation of the exponential term. We claim that 
$$
J_t(x) = \exp \left(\int_0^t \Delta S(\tau, G^\tau x)d\tau\right)
$$
is equal to the Jacobian of the flow $G^t$. 
Here the Jacobian is 
defined intrinsically via the identity
$$
 (\det d G^t) \Vol =(G^t)^* \Vol.
$$
Differentiating this equality with respect to $t$, we obtain that
$$
\left(\frac{d}{dt}\det d G^t\right) \Vol = \frac{d}{dt} (G^t)^* \Vol = L_{\nabla S} \Vol = (\Delta S) \Vol.
$$
Solving this differential equation, we obtain the equality
$$
J_t(x) = \det d G^t (x).
$$

\subsection{Relation to the standard calculus of variations}

From formulas~\eqref{eq:error} and~\eqref{eq:a} it follows that the $L^2$ norm of the WKB approximation blows up if $J_t(x)$ is zero for some $t$ and $x$. The set of zeros of $J_t$ is called the \emph{caustic} and, if it is nonempty, the WKB approximation breaks down. If we multiply the WKB ansatz by a smooth function whose support has empty intersection with the caustic for sufficiently small times, then this new approximation is going to be a well-defined smooth function.
 The goal of this section is to recall some results that will help us to prove that caustics in certain regions of the configuration space cannot develop for sufficiently small times.

In order to prove it, we use a link between the caustics and the minimality of extremal curves of the action functional
$$
\cS[x(\cdot)] =  S_0(x_0) + \int_0^t \left(\frac{\| \dot{x}(s)\|^2}{2} - V(s,x(s))\right)ds.
$$
It is known that each extremal curve must satisfy a Hamiltonian system with Hamiltonian given by~\eqref{eq:hamiltonian} and with initial momentum $p(0) = d_{x(0)} S_0$~\cite{calc_var}. Thus, 
the wavefront 
at time $t$, given by the endpoints at time $t$ of all such extremal curves in $T^*M$, coincides with the Lagrangian surface $L_t$ from which we constructed solutions of the Hamilton--Jacobi equation in the previous section.

Caustics play an important role in 
calculus of variations: in the case of the functional $\cS$ they indicate whether or not there are infinitesimal variations of the extremal curve that decrease the value of $\cS$ with fixed final point~\cite{calc_var}. An intersection point between a given extremal trajectory and the caustic is called a \emph{conjugate point} and the corresponding time a \emph{conjugate time}. Techniques from calculus of variations allow to estimate the time $t$ at which 
$L_t$ starts developing caustics. In particular, we will rely on the following result.
\begin{theorem}[\!\!\cite{calc_var}]\label{thm:cited1}
Sufficiently small arcs of extremal curves of $\cS$ do not contain conjugate points.
\end{theorem}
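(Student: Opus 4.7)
The plan is to reduce the theorem to a continuity statement about the Jacobian of the configuration-space flow $G^t$, exploiting the fact that $G^0=\id_M$. Recall from Section~\ref{sec:WKB} that $G^t(x)=\pi\circ\Phi^t(x,d_xS_0)$ and that the caustic at time $t$ is the locus where the Jacobian $\det dG^t(x)$ vanishes. Correspondingly, a conjugate point along the extremal $x(\cdot)=G^{(\cdot)}(x_0)$ is a time $\tau>0$ at which $\det dG^\tau(x_0)=0$. So the task is to show that this Jacobian cannot vanish on the arc $\{x(t) : t\in[0,\tau_0]\}$ for some $\tau_0>0$ depending on $x_0$.

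The key observation is that $G^0=\id_M$ implies $dG^0(x_0)=I$ and $\det dG^0(x_0)=1$. Since the Hamiltonian~\eqref{eq:hamiltonian} is smooth, the flow $\Phi^t$ depends smoothly on $(t,x,p)$, and so does $(t,x)\mapsto dG^t(x)$. Consequently $(t,x)\mapsto\det dG^t(x)$ is continuous, and there exists $\tau_0=\tau_0(x_0)>0$ such that $\det dG^t(x_0)>1/2$ for every $t\in[0,\tau_0]$. This excludes the existence of conjugate points along the arc $x|_{[0,\tau_0]}$, which is exactly the statement of the theorem.

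An equivalent route, which makes the geometric content more transparent, goes through the Jacobi equation: a nonzero Jacobi field $J$ along $x(\cdot)$ with boundary condition at $t=0$ prescribed by the Lagrangian $L_0=\{(x,d_xS_0)\}$ vanishes at $t=0$ iff it is tangent to the vertical distribution there. Writing $J(t)=t\dot J(0)+O(t^2)$ then shows $J(t)\neq 0$ for $t>0$ sufficiently small, since $\dot J(0)\neq 0$ follows from the fact that the kinetic term $\|p\|^2/2$ in $H$ makes the horizontal component of $\vec H_\ast v$ nonzero for any nonzero vertical $v$.

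There is essentially no hard step in this proof: the whole argument is a soft continuity statement about a smooth family of maps that degenerates to the identity at $t=0$, and the only conceptual point that deserves attention is the identification of conjugate points as zeros of $\det dG^t$ for the Lagrangian initial condition encoded by $S_0$, which is precisely the geometric reformulation of caustics given in Section~\ref{sec:WKB}.
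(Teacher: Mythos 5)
Your argument is correct, but it is not the route the paper takes: the paper does not prove this statement at all, it quotes it from \cite{calc_var} and justifies it in one line by the Legendre--Clebsch condition, i.e.\ by the classical sufficient condition for local minimality of short extremal arcs. Your proof is a direct and more elementary argument tailored to the specific setting of Section~\ref{sec:WKB}: since the initial datum is the \emph{graph} Lagrangian $L_0=\{(x,d_xS_0)\}$, one has $G^0=\id_M$, hence $\det dG^0(x_0)=1$, and smooth dependence of the flow of the Hamiltonian \eqref{eq:hamiltonian} on time and initial conditions keeps $t\mapsto\det dG^t(x_0)$ away from zero on a small interval; this matches the identification of the first conjugate time with the first zero of $\det dG^t(x_0)$ that the paper itself uses in the corollary that follows (and it is all that is needed there, namely positivity of the first conjugate time along each characteristic emanating from $L_0$). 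What the citation to the Legendre--Clebsch condition buys, and your soft continuity argument does not, is the general statement of the calculus of variations---in particular the fixed-endpoint/point-source case, where the initial Lagrangian is a fiber, $dG^0$ is singular, and ruling out conjugate points on short arcs genuinely requires the Legendre condition; your second, ``equivalent'' Jacobi-field paragraph is in fact the argument for that case and is slightly misplaced here (for a graph Lagrangian no nonzero Jacobi field issued from $L_0$ projects to zero at $t=0$, because $T_{\lambda_0}L_0$ meets the vertical subspace only in the origin), but it is harmless: your first paragraph is self-contained and proves the statement in the form in which the paper uses it, provided one reads ``conjugate point of the arc from $x_0$'' as a zero of $\det dG^t(x_0)$, which is exactly the reading adopted in the paper's subsequent corollary.
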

This is a consequence of the Legendre--Clebsch condition, which is verified for the functional $\cS$. It is a sufficient condition for local minimality of small arcs of extremal curves. 
\begin{corollary}
The first conjugate time is a lower semi-continuous function from $M$ to $(0,+\infty]$ of the initial point $x \in M$. In particular, the restriction of the  first conjugate time to a compact subset of 
$M$ has a 
positive lower bound.
\end{corollary}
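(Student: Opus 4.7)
The plan is to characterize the first conjugate time analytically, via the vanishing of $\det dG^t(x)$, and then derive lower semi-continuity from the smoothness of the Hamiltonian flow.

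First I would observe that, for a fixed $x\in M$, a time $t>0$ is a conjugate time of the extremal curve $\tau\mapsto G^\tau(x)$ exactly when the differential $dG^t(x)$ is singular, so that
\[
t_c(x) \;=\; \inf\{\,t>0 \,:\, \det dG^t(x) = 0\,\},
\]
with the convention $\inf \emptyset = +\infty$. This agrees with the role of $J_t(x)=\det dG^t(x)$ in Section~\ref{sec:WKB}: the caustic is precisely the locus where $dG^t$ fails to be invertible. Theorem~\ref{thm:cited1} guarantees $t_c(x)>0$ for every $x\in M$, so $t_c$ indeed takes values in $(0,+\infty]$.

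Next I would verify lower semi-continuity. Fix $x_0\in M$ and pick any $\tau\in(0,t_c(x_0))$. By the definition of $t_c(x_0)$, $\det dG^t(x_0)\neq 0$ for every $t\in[0,\tau]$. Since $\Phi^t$ depends smoothly on $(t,x)$, the function $(x,t)\mapsto \det dG^t(x)$ is continuous on $M\times[0,+\infty)$. Using the compactness of $[0,\tau]$, I obtain a neighborhood $U$ of $x_0$ and a constant $\varepsilon>0$ such that $|\det dG^t(x)|\geq \varepsilon$ on $U\times[0,\tau]$. Hence $t_c(x)>\tau$ for all $x\in U$, and letting $\tau\nearrow t_c(x_0)$ gives $\liminf_{x\to x_0} t_c(x)\geq t_c(x_0)$, which is lower semi-continuity.

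For the second assertion, let $K\subset M$ be compact and pick a sequence $x_n\in K$ with $t_c(x_n)\to \inf_K t_c$. By compactness we may extract a subsequence $x_n\to x^*\in K$, and lower semi-continuity yields $\inf_K t_c = \liminf_n t_c(x_n) \geq t_c(x^*) > 0$. The only delicate point is the analytic characterization of conjugate times via the vanishing of $\det dG^t$; once it is granted, the rest is a routine continuity-and-compactness argument resting on smoothness of the Hamiltonian flow.
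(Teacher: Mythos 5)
Your proposal is correct and follows essentially the same route as the paper: positivity from Theorem~\ref{thm:cited1} and lower semi-continuity from the characterization of the caustic as the zero set of the jointly continuous function $(t,x)\mapsto\det dG^t(x)$, with the compactness claim then being routine. You merely spell out the continuity--compactness details that the paper leaves implicit.
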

The positivity of the first conjugate time follows from Theorem~\ref{thm:cited1}. Its lower-semicontinuity follows from the definition of the caustic as the set of zeroes of the smooth function $\det dG^t(x)$.

In terms of the functions $a$ and $J_t$ introduced in the previous section, we can rephrase the second part of the previous corollary as follows.
\begin{corollary}
\label{cor:opt}
Let $\Omega$ be compactly contained in $M$. 
The function $|J_t|$ restricted to 
$\Omega$ 
has a uniform positive lower bound 
for sufficiently small times.
In particular, given $a_0:\Omega\to\R$ smooth and $\Omega_0$ compactly contained in $\Omega$, there exists 
 $\varepsilon>0$ such that 
the 
system of equations~\eqref{eq:seco}--\eqref{eq:terzio} admits a smooth solution $(S,a)$ on $[0,\varepsilon]\times \Omega_0$ that depends only on $a_0$, $S_0|_{\Omega}$, and $V|_{[0,\varepsilon]\times \Omega}$. 
\end{corollary}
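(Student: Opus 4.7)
The plan is to read off the first assertion almost immediately from the previous corollary, and then to bootstrap from the non-vanishing of $J_t$ to build smooth solutions of \eqref{eq:HJ} and \eqref{eq:trans}. Since $\Omega$ is compactly contained in $M$, its closure $\overline{\Omega}$ is compact. Applying the previous corollary to $\overline{\Omega}$ yields a uniform lower bound $\tau^{*}>0$ on the first conjugate time for initial points in $\overline{\Omega}$. Because $J_t(x)=\det dG^t(x)$ is smooth, equals $1$ at $t=0$, and vanishes on $[0,\tau^{*})\times \overline{\Omega}$ exactly at conjugate times, we conclude by continuity that $J_t>0$ on $[0,\tau^{*})\times \overline{\Omega}$. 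A compactness argument then gives $\min_{[0,\tau^{*}/2]\times\overline{\Omega}}|J_t|>0$, which is the first claim.

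For the second claim, I would first produce the phase $S$. The non-vanishing of $J_t$ just established means that $G^t:\Omega\to M$ is a local diffeomorphism for $t\in[0,\tau^{*}/2]$. Since $G^0=\mathrm{id}$ and $\Omega_0\subset\subset\Omega$, a standard compactness-plus-inverse-function argument (applied on a slightly smaller cylinder $[0,\varepsilon]\times \Omega'$ with $\Omega_0\subset\subset\Omega'\subset\subset\Omega$) gives an $\varepsilon>0$ for which $G^t$ is a diffeomorphism from an open neighborhood $U\supset\overline{\Omega_0}$ onto its image, uniformly in $t\in[0,\varepsilon]$. Hence the Lagrangian submanifold $L_t=\Phi^t(L_0)$ projects diffeomorphically onto $G^t(U)\supset\Omega_0$, and the method of characteristics recalled in Section~\ref{sec:WKB} produces a smooth function $S:[0,\varepsilon]\times\Omega_0\to\R$ solving \eqref{eq:HJ} with $S(0,\cdot)=S_0|_{\Omega_0}$.

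With $S$ in hand, I would define the amplitude $a$ by the explicit formula
$$
a(t,x)=a_0\bigl((G^t)^{-1}(x)\bigr)\,J_t\bigl((G^t)^{-1}(x)\bigr)^{-1/2},\qquad (t,x)\in[0,\varepsilon]\times\Omega_0,
$$
where the local inverse of $G^t$ is well defined on $\Omega_0$ by the previous step, and the square root is smooth because $|J_t|$ is bounded below by a positive constant. A direct check shows $a$ solves the transport equation \eqref{eq:trans}. For the dependence assertion, I would observe that $G^t$ is the projection of the Hamiltonian flow $\Phi^t$ of $H=\|p\|^2/2+V$ starting at $(x,d_xS_0)$, so it depends only on $V|_{[0,\varepsilon]\times\Omega}$ and $S_0|_\Omega$ once one has verified (by shrinking $\varepsilon$ again) that every characteristic needed to evaluate $(S,a)$ on $\Omega_0$ stays above $\Omega$; then $a$ depends additionally only on $a_0$.

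The main technical point — and the only step that is not essentially a continuity or compactness argument — is justifying that the characteristic curves required to construct $(S,a)$ on $[0,\varepsilon]\times\Omega_0$ remain above $\Omega$. This is handled by a uniform continuity estimate on the flow $\Phi^t$ applied to the compact set $\{(x,d_xS_0):x\in\overline{\Omega_0}\}$, which forces the footpoints to stay in the intermediate open set $\Omega'\subset\subset\Omega$ for $t$ small enough; the remaining verifications are routine.
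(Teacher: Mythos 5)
Your proposal is correct and follows essentially the route the paper intends: the uniform lower bound on the first conjugate time over the compact set $\overline{\Omega}$ (previous corollary) gives, via continuity of $J_t=\det dG^t$ and $J_0\equiv 1$, a positive lower bound on $|J_t|$ for small times, after which the method of characteristics from Section~\ref{sec:WKB} yields $S$ and the explicit amplitude $a=a_0\circ (G^t)^{-1}\cdot J_t^{-1/2}$, with locality in $\Omega$ obtained by keeping the characteristics over an intermediate set $\Omega'$. The only step you gloss over (that $G^t(U)\supset\Omega_0$ for small $t$, i.e.\ surjectivity onto $\Omega_0$ and not just local invertibility) is standard since $G^t$ converges uniformly to the identity on compact sets, and is at the same level of detail the paper itself leaves implicit.
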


\subsection{Proof of Theorem~\ref{thm:main1} in the case when $N_2$ is a point}

In the case when $N_2$ is just a point, we have that $M = N_1$, the second condition of Theorem~\ref{thm:main1} is trivially satisfied and the first condition is simply given by $W|_{\Omega} = c$.

Let us fix a nonempty subset $\Omega'$ compactly contained in $\Omega$. 
In the open set $\Omega$ the classical dynamics are independent of the control, 
so it will take some time for each trajectory starting from $\Omega'$ to exit from $\Omega$, uniformly with respect to the control function. 
We wish to exploit this fact to prove that not only $T_c$, but also $T_q$ is positive.

Consider 
a smooth cut-off function $\chi$ 
whose support is equal to $\overline{\Omega'}$.
Fix two smooth functions $a_0,S_0:\Omega\to \R$
such that 
\begin{equation}\label{eq:normalizz}
\|\chi a_0\|_{L^2(\Omega)}=1.
\end{equation}
 Following 
 Corollary~\ref{cor:opt}, 
 there exists a time $\varepsilon>0$ such that, for every 
admissible control $u:[0,\varepsilon]\to \R$,
the 
system of equations~\eqref{eq:seco}--\eqref{eq:terzio} admits a smooth solution $(S,a)$ on $[0,\varepsilon]\times \Omega'
$ depending only on $a_0$, $S_0$, and $V|_{\Omega}$.

Then, for every 
admissible control $u:[0,\varepsilon]\to \R$,
define
\begin{equation}\label{eq:ansatz0}
\varphi(t,x) = \chi(x)\tilde \psi(t,x) \exp\left(  - \ii c\int_0^t u(s)ds\right),\qquad t\in [0,\varepsilon],\;x\in M,
\end{equation}
where $\tilde{\psi}(t,x)=a(t,x)e^{\ii S(t,x)}$ (with the 
convention that $\chi(x)\tilde \psi(t,x)=0$ for $x\notin \supp\chi$, even if $\tilde \psi(t,x)$ is not defined).
Notice that $\tilde{\psi}(0,\cdot)$ is in $\cS(M)$ thanks to \eqref{eq:normalizz}.
If we plug the ansatz above in the Schr\"odinger equation we find
\begin{align*}
\exp\left(   \ii c\int_0^t u(s)ds\right) &\left( \ii\p_t + \frac{\Delta}{2} - V - u(t) W\right) \varphi =\left( \ii\p_t + \frac{\Delta}{2} - V\right) (\chi \tilde \psi) = \\
=& \chi \left( \ii\p_t + \frac{\Delta}{2} - V\right) \tilde \psi + \langle \nabla \chi, \nabla \tilde{\psi} \rangle + \frac{\Delta \chi }{2}\tilde \psi  
= 
\chi \frac{\Delta a}{2} +  \langle \nabla \chi, \nabla \tilde{\psi} \rangle + \frac{\Delta \chi }{2}\tilde \psi,\end{align*}
where the first equality is a consequence of the identity  $\chi(W - c) \equiv 0$ and the last one follows from \eqref{eq:sch3}. 

Define $r:[0,\varepsilon]\times M\to \C$ by
\[
r(t,x)=\exp\left(-   \ii c\int_0^t u(s)ds\right)\left(\chi(x) \frac{\Delta a(t,x)}{2} +  \langle \nabla \chi(x), \nabla \tilde{\psi}(t,x) \rangle + \frac{\Delta \chi(x) }{2}\tilde \psi(t,x)\right),
\]
so that 
\[\left( \ii\p_t + \frac{\Delta}{2} - V - u(t) W\right) \varphi =r\qquad \mbox{on}\ [0,\varepsilon]\times M.\]
Notice that $r$ is smooth with respect to $x$ and that it depends only on $a_0$, $S_0$ and $V|_{\Omega}$, and not on the control function $u$.

By the Duhamel formula we have
$$
\varphi(t,\cdot) - \psi(t,\cdot) = \int_0^t U(s,t) r(s,\cdot) ds,
$$
where $U(s,t)$ is the propagator from time $s$ to time $t$ of~\eqref{eq:schro}
and 
$\psi$ is the solution of (\ref{eq:schro}) 
with initial condition 
$$
\psi(0,x) = \varphi(0,x),
$$
that is, $\psi(t,\cdot)=U(0,t)\varphi(0,\cdot)$.
Using the fact that $U(s,t)$ is unitary we obtain 
\begin{equation}\label{eq:restt}
\|\varphi(t,\cdot) - \psi(t,\cdot)\|_{L^2(M)} \leq \int_0^t \|U(s,t) r(s,\cdot)\|_{L^2(M)} ds \leq \int_0^t \| r(s,\cdot)\|_{L^2(M)}ds.
\end{equation}
Up to eventually reducing 
$\varepsilon$, assume that
\begin{equation}\label{eq:std}
\delta:=\int_0^\varepsilon \| r(s,\cdot)\|_{L^2(M)}ds
<1.
\end{equation}
Take now $\psi_1\in \cS(M)$ supported in $M\setminus \Omega$, so that $\|\psi_1-\varphi(t,\cdot)\|_{L^2(M)}=1+\|\varphi(t,\cdot)\|_{L^2(M)}\ge 1$ for every $t\in [0,\varepsilon]$ and every control law $u$.
Then, by triangular inequality and because of \eqref{eq:restt} and \eqref{eq:std}, for every $t\in [0,\varepsilon]$ and every $u(\cdot)$,
\[\|\psi_1-\psi(t,\cdot)\|_{L^2(M)}\ge \|\psi_1-\varphi(t,\cdot)\|_{L^2(M)}-\|\varphi(t,\cdot) - \psi(t,\cdot)\|_{L^2(M)}\ge 1-\delta>0,\]
concluding the proof that $T_q>0$.

\subsection{Proof of Theorem~\ref{thm:main1}}

In order to complete the proof of Theorem~\ref{thm:main1}, we wish to exploit the product structure of the manifold $M$. The Schr\"odinger equation takes the form
\begin{equation}
\label{eq:double_schro}
\ii\p_t \psi = -\frac{\Delta_1 \psi  + \Delta_2 \psi}{2} + V\psi + u W\psi,
\end{equation}
where $\Delta_1,\Delta_2$ are the Laplace--Beltrami operators on $N_1$, $N_2$.
 Similarly to the case when $N_2$ is a point, we 
 fix $\Omega'$ compactly contained in $\Omega$ and we
 look for an approximate solution of the form
\begin{equation}
\label{eq:ansatz}
\varphi(t,x,y)=\chi(x)\psi_1(t,x)\psi_2(t,y)
\end{equation}
where $x\in N_1$, $y\in N_2$, and 
$\chi$ is a smooth cut-off function whose support is $\overline{\Omega'}$. We can assume without any loss of generality that $\Omega$ is a %normal 
coordinate neighborhood centered at some $x_0\in \Omega'$. 

The first condition of the theorem tells us that $W(x,y) = \tilde W(y)$ for some function $\tilde W \in C^2(N_2)$ and all $x \in \Omega$, $y\in N_2$. So let us assume that $\psi_2
$ is a solution of 
$$
\ii\p_t \psi_2(t,y) = -\frac{\Delta_2 \psi_2(t,y)}{2} + V(0,y)\psi_2(t,y)+u(t)\tilde W(y)\psi_2(t,y).
$$
Plugging the ansatz~\eqref{eq:ansatz} into the Schr\"odinger equation~\eqref{eq:double_schro} above we find that
\begin{align*}
&\left( \ii\p_t  + \frac{\Delta }{2} - V(x,y) - u(t)W(x,y)\right)\varphi(t,x,y) = \psi_2(t,y)\left(\chi(x)\left(\ii\p_t  +\frac{\Delta_1 }{2}\right)\psi_{1}(t,x) + \right.\\
+ &\langle \nabla \chi(x), \nabla \psi_1(t,x) \rangle + \frac{\Delta_1 \chi(x)}{2}\psi_1(t,x) + \chi(x)(V(0,y)-V(x,y))\psi_1(t,x)+\\
+& \left.u(t)\chi(x)(W(x,y) - \tilde{W}(y))\psi_1(t,x)\right) =: r(t,x,y).
\end{align*}
Notice that, by construction,
$$
\chi(x)(W(x,y) - \tilde{W}(y)) \equiv 0.
$$ 

As in the previous section, 
let $a_0,S_0:\Omega\to \R$ be smooth and, thanks to Corollary~\ref{cor:opt},
 fix $\varepsilon>0$ such that
 there exist $a,S:[0,\varepsilon]\times \Omega'\to \R$ smooth 
such that
$$
\psi_1(t,x) = a(t,x)e^{\ii S(t,x)}
$$ 
is a solution of
$$
\ii\p_t \psi_1 + \frac{\Delta_1 \psi_1}{2} = \frac{\Delta_1 a}2
$$
on $[0,\varepsilon]\times\Omega'$.

Then 
\begin{align*}
r(t,x,y)=\psi_2(t,y)\Big(\chi(x) \frac{\Delta_1 a}2
+ \langle \nabla \chi(x), \nabla \psi_1(t,x) \rangle + \frac{\Delta_1 \chi(x)}{2}\psi_1(t,x)\\
+ \chi(x)(V(0,y)-V(x,y))\psi_1(t,x)\Big).
\end{align*}

The $L^2$ norm of  
\[M\ni (x,y)\mapsto \psi_2(t,y)\Big(\chi(x) \frac{\Delta_1 a}2
+ \langle \nabla \chi(x), \nabla \psi_1(t,x) \rangle + \frac{\Delta_1 \chi(x)}{2}\psi_1(t,x)\Big)\]
is uniformly bounded with respect to $t\in [0,\varepsilon]$.

Let us now focus on the term of $r$ involving $V$. 
In local coordinates on $\Omega$ we have 
$$
\chi(x)(V(0,y)-V(x,y))= \chi(x) \int_0^1   \langle \nabla_1 V(tx,y),x  \rangle_E dt \leq K(x) \chi(x)\|x\|_E \int_0^1 c(tx)dt
$$
where $\langle\cdot,\cdot\rangle_E$, $\|\cdot\|_E$ are the Euclidean scalar product and norm
 and $K(x)$ is a function that comes from equivalence of the Euclidean norm and the norm coming from the Riemannian scalar product. 
 We then deduce that the $L^2$ norm of 
\[M\ni (x,y)\mapsto \psi_2(t,y)\chi(x)(V(0,y)-V(x,y))\psi_1(t,x)\]
 is uniformly bounded with respect to $t\in [0,\varepsilon]$.

Summarizing everything we find that,  as in the previous section, $\varphi$ is an approximate solution of \eqref{eq:double_schro} concentrated in $\Omega \times N_2$ with a uniformly bounded error term
$$
\int_0^t \|r(s,\cdot)\|_{L^2(M)} ds
$$
which tends to $0$ as $t\to 0+$ uniformly with respect to $u$. Thus the occupation probability cannot be transferred outside $\Omega \times N_2$ in arbitrarily small time.
\begin{remark}\label{thet}
An alternative construction of the ansatz $\varphi$ in \eqref{eq:ansatz0} and \eqref{eq:ansatz}
could be obtained by replacing $\tilde \psi$ and $\psi_1$ by local regular (bounded in $H^1$) solutions of the Schr\"odinger equation. 
We have chosen to rely on the WKB approximation, since it permits a better 
%understanding of the 
parallel with the controllability of the corresponding classical system. 
\end{remark}

\begin{akn}
The authors would like to thank Patrick Gerard, Camille Laurent and Holger Teismann for helpful discussions and useful remarks.
\end{akn}

\section*{Appendix: An example of a quantum, but not classically controllable system}
\label{app}

In~\cite{quantum_control} it was proven that the harmonic oscillator with Gaussian control potential
\begin{equation}
\label{eq:gauss}
 \ii\p_t \psi(t,x) = \left( -\p_x^2+ x^2 + u(t) e^{ax^2 + bx +c} \right)  \psi (t,x)
\end{equation}
is controllable for a generic choice of real numbers $a<0,b,c$. In particular, the Gaussian must be non-centered in order to avoid reflection symmetry. It was an application of the following theorem proven in the same article.

\begin{theorem}
\label{thm:control_quant}
Consider a controlled evolution %abstract Schr\"odinger 
equation on a Hilbert space ${\cal H}$ of the form
\begin{equation}
\label{eq:schro_abstr}
\ii\p_t \psi = \left( A + v(t) B \right)  \psi,
\end{equation}
where 
$v(t)$ takes values in an interval $(0,\delta)$ and $A,B$ are self-adjoint operators on ${\cal H}$.
%and  for some $\delta > 0$. 
Suppose that $A$ has discrete spectrum $\{\lambda_i\}_{i\in \N}$, that the 
corresponding eigenvectors $\{\phi_i\}_{i\in \N}$ form an orthonormal basis of ${\cal H}$ and belong to $D(B)$. Suppose, moreover,  that
$A+v B:\mathrm{span}\{\phi_i\mid i\in \N\}\to {\cal H}$ is essentially self-adjoint  for every $v\in (0,\delta)$. 
  Denote by $b_{ij}=\langle B\phi_i,\phi_{j} \rangle$ the components of $B$ in this basis and by $B^{(k)}$ the first principal minor of order $k$. If 
\begin{itemize} 
\item the elements of $\{\lambda_{i+1}-\lambda_i\}_{i\in \N}$ are $\Q$-linearly independent,
\item for any $j \in \N$ there exists $k \geq j$, such that $B^{(k)}$ is connected,
\end{itemize}
then the Schr\"odinger equation~\eqref{eq:gauss} is approximately controllable.
\end{theorem}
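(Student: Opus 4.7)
The plan is to establish approximate controllability by combining a Galerkin truncation with a rotating-wave / averaging argument that exploits the $\Q$-linear independence of the spectral gaps $\{\lambda_{i+1}-\lambda_i\}_{i\in\N}$. Fix initial and target states $\psi_0,\psi_1\in\cH$ of unit norm and a tolerance $\varepsilon>0$. Since $\{\phi_i\}$ is an orthonormal basis, first choose $k$ large enough that $\psi_0,\psi_1$ lie within $\varepsilon/3$ of their orthogonal projections $P_k\psi_0, P_k\psi_1$ onto $\cH_k:=\mathrm{span}\{\phi_1,\dots,\phi_k\}$, and simultaneously that $B^{(k)}$ is connected (possible by the second hypothesis). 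The task then reduces to steering $P_k\psi_0$ arbitrarily close to $P_k\psi_1$ through the flow of \eqref{eq:schro_abstr}, after which two triangle inequalities close the argument.

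Next, pass to the interaction picture by setting $\Psi(t)=e^{\ii tA}\psi(t)$, which converts the equation into $\ii\dot\Psi=v(t)\tilde B(t)\Psi$ with $\tilde B(t)_{ij}=b_{ij}e^{\ii(\lambda_i-\lambda_j)t}$. Restricted to $\cH_k$, apply, for each pair $(i,j)$ with $b_{ij}\neq 0$, a quasi-resonant control of the form $v(t)=v_0+\alpha\cos\bigl((\lambda_j-\lambda_i)t+\theta\bigr)$. The $\Q$-linear independence of the consecutive gaps implies that the differences $\lambda_r-\lambda_s$ with $r\neq s$ in $\{1,\dots,k\}$ are pairwise distinct and, more generally, that no nontrivial resonance relation between them holds. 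A standard averaging / rotating-wave argument (small $\alpha$, long time, fixed integrated action) then shows that at first order only the $(i,j)$ and $(j,i)$ entries of $\tilde B$ contribute, yielding an elementary skew-Hermitian generator supported on the $(\phi_i,\phi_j)$-plane; by varying the phase $\theta$ one recovers both $\ii(E_{ij}+E_{ji})$ and $E_{ij}-E_{ji}$. Combined with the diagonal unitaries produced by the free evolution under $-\ii A$ restricted to $\cH_k$, the connectedness of the graph associated with $B^{(k)}$ is precisely the classical Lie-theoretic condition ensuring that these generators span $\mathfrak{u}(k)$. Hence the induced control group acts transitively on the unit sphere of $\cH_k$, and $P_k\psi_0$ can be steered within any desired distance of $P_k\psi_1$ in the truncated dynamics.

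Finally, transfer this conclusion back to the full system via a \emph{good Galerkin approximation} estimate, asserting that solutions of the truncated equation remain $L^\infty_t$-close to those of \eqref{eq:schro_abstr} uniformly over the family of controls used. The essential self-adjointness hypothesis guarantees a well-defined unitary propagator for the full system, and the quantitative comparison proceeds by first approximating $\psi_0$ by a vector of compact spectral support and then applying an energy estimate bounding $\|(I-P_k)\psi(t)\|$ by $\|(I-P_k)\psi_0\|$ plus a term depending on $\int_0^t|v(s)|\,ds$ and on the growth of $B$ on high modes. The hard part is precisely this quantitative lift: the rotating-wave controls from the previous step may have large $L^1$-norm (small amplitude compensated by long duration), so $k$, the amplitude $\alpha$, and the total steering time must be tuned consistently to keep the truncation error below $\varepsilon/3$. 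Once this balance is achieved, the triangle inequality on the three $\varepsilon/3$ contributions --- initial projection, finite-dimensional steering, and Galerkin lift --- completes the proof.
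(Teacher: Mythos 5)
First, a point of reference: the paper itself does not prove Theorem~\ref{thm:control_quant}; it is quoted verbatim from the cited article \cite{quantum_control} (``proven in the same article''), so there is no internal proof to compare against. Your outline does follow the general strategy of that reference --- Galerkin truncation, exploitation of the $\Q$-linear independence of the gaps to isolate a single resonant pair, connectedness of $B^{(k)}$ to generate transitivity on the sphere of the truncated space, and a final lifting estimate --- with the variant that you use sinusoidal quasi-resonant controls and a rotating-wave argument where the cited proof works with piecewise-constant controls and a convexification/time-averaging of $e^{\ii tA}Be^{-\ii tA}$ along the almost-periodic free flow (Kronecker's theorem). That substitution is legitimate in spirit, although you should note that the constraint $v(t)\in(0,\delta)$ forces a nonzero mean $v_0$, whose contribution $v_0\,\mathrm{diag}(b_{ll})$ does not average away in the interaction picture and must be absorbed into the generated Lie algebra or sent to zero together with $\alpha$.

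The genuine gap is in your last step, which you yourself flag as ``the hard part'' and then do not carry out: the transfer from the controllable Galerkin system back to \eqref{eq:schro_abstr}. This is not a routine tuning of parameters; it is the core of the theorem. What makes the lift possible in the actual proof is a quantitative fact your sketch never establishes, namely that the finite-dimensional steering can be achieved with controls whose relevant norm (the $L^1$-type quantity entering the Galerkin error estimate) is bounded \emph{uniformly} in the truncation order $k$ and in the accuracy of the steering --- roughly, each elementary population transfer costs an integrated action of order $\pi/|b_{ij}|$, independent of $k$. With your rotating-wave controls, $\alpha T$ is fixed but $v_0T$ can blow up as $\alpha\to0$ unless $v_0$ is simultaneously sent to $0$, and you never verify that the resulting family of controls keeps the truncation error small while still generating (approximately) all of $\mathrm{SU}(k)$ on $\cH_k$; as written, ``once this balance is achieved'' assumes exactly what must be proved. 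Moreover, the energy estimate you invoke, bounding $\|(I-P_k)\psi(t)\|$ in terms of $\int_0^t|v(s)|\,ds$ and ``the growth of $B$ on high modes'', is not available under the hypotheses of the theorem, which only require $\phi_i\in D(B)$ and essential self-adjointness of $A+vB$ on $\mathrm{span}\{\phi_i\}$; obtaining a usable approximation lemma under these weak assumptions is precisely the technical content of the cited work, and it is missing here.
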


Recall that with each Hermitian matrix $C$ we can associate a graph with $n$ vertices in such a way that the vertices $i$ and $j$ are connected by an edge if and only if $c_{ij}\neq 0$. Then the matrix $C$ is said to be \emph{connected} if the corresponding graph is connected.

Note that Theorem~\ref{thm:control_quant} does not apply immediately to the harmonic oscillator since $\lambda_{i+1}-\lambda_i = 1$ for all $i\in \N$. However, it is proven in~\cite{quantum_control} that for a suitable choice of constants $a,b,c$ and almost every $\mu \in \R$ the operator 
$$
A_\mu = -\p_x^2+ x^2 + \mu e^{ax^2 + bx +c}
$$
 has $\Q$-linearly independent differences $\lambda_{i+1}-\lambda_i$. This result is proven using some analyticity arguments, which reduce the proof of the statement to the proof of $\Q$-linear independence of the elements $b_{ii}$. Thus one can prove controllability of the harmonic oscillator with a Gaussian potential by considering an equivalent system of the form~\eqref{eq:schro_abstr} with
$$
A = A_\mu, \quad B =  e^{ax^2 + bx +c}, \quad v(t) = u(t) - \mu.
$$

Let us fix $a,b,c$ such that system~\eqref{eq:gauss} is controllable. We claim that there exists $\varepsilon>0$ such that 
$$
\ii\p_t \psi(t,x) = \left( -\p_x^2+ x^2 + u(t) \chi_{\{x > \varepsilon\}}e^{ax^2 + bx +c} \right)  \psi (t,x)
$$
is also approximately controllable. Indeed, let us denote $\hat B = \chi_{\{x > \varepsilon\}}e^{ax^2 + bx +c}$. Then using the arguments of~\cite{quantum_control} it is sufficient to prove that $\hat b_{ii}$ are all $\Q$-linearly independent and that $\hat B^{(k)}$ is connected for every $k$.

We know from the explicit formula for eigenfunctions of the harmonic oscillator that 
$$
\hat b_{ij}
 = b_{ij} - C_{ij}\int_{-\varepsilon}^{\varepsilon} e^{(a-1)x^2 + bx + c} H_i(x) H_j(x) dx,
$$
where $C_{ij}$ are constants and $H_i$ are Hermite polynomials. We note that under the integral we have analytic functions, since $H_i$ are polynomials. Hence the integrals 
$$
f_{ij}(\varepsilon) =  C_{ij}\int_{-\varepsilon}^{\varepsilon} e^{(a-1)x^2 + bx + c} H_i(x) H_j(x) dx
$$
depend analytically on $\varepsilon$ and converge to $0$ as $\varepsilon\to 0$. 
Hence, if 
$b_{ij}$ is different from $0$ then  
the set $f_{ij}^{-1}(b_{ij})$ has zero measure, which means that 
 $\hat b_{ij} \neq 0$
for almost every $\varepsilon > 0$. 
Therefore, since the finite union of zero measure sets has measure zero, 
if  the matrix $B^{(k)}$ is connected then 
 $\hat B^{(k)}$
is also connected for almost every $\varepsilon > 0$. Finally, for almost every $\varepsilon>0$ the following holds true: for any $j \in \N$ there exists $k \geq j$, such that $\hat B^{(k)}$ is connected.
 
Similarly we can prove that the set of $\varepsilon$ for which the elements of $\{\hat{b}_{ii}\}_{i\in \N}$ are $\Q$-linearly dependent has measure zero. Indeed if  the elements of $\{\hat{b}_{ii}\}_{i\in \N}$ are linearly dependent, then there exists a finite number of $\lambda_1,\dots,\lambda_k \in \Q$, such that 
$$
0 = \sum_{i=1}^k \lambda_i \hat{b}_{ii} = \sum_{i=1}^k \lambda_i b_{ii} - \sum_{i=1}^k \lambda_i f_{ij}(\varepsilon).
$$
Reasoning as above, 
$$
\left(\sum_{i=1}^k \lambda_i f_{ij}\right)^{-1}\left( \sum_{i=1}^k \lambda_i b_{ii} \right)
$$
has measure zero for each fixed set $\lambda_1,\dots,\lambda_k \in \Q$. Thus from countability of $\Q$ the claim follows.

On the other hand, 
the classical system of the form~\eqref{eq:class} with Hamiltonian
$$
H(t,p,x) = p^2 + x^2 + u(t) \chi_{\{x > \varepsilon\}}e^{ax^2 + bx +c}
$$
is not approximately controllable, because discs centered at the origin in phase space and of radius smaller than $\varepsilon$ are invariant under the Hamiltonian flow.

\bibliographystyle{siam}
\bibliography{references}

\end{document}